\documentclass{article}
 
\usepackage{amsthm}
\usepackage{amsmath}
\usepackage{amssymb}
\usepackage{amsfonts}
\usepackage{amsxtra} 
\usepackage{mathrsfs}
\usepackage{color}
\usepackage{graphicx}
\usepackage{enumerate} 
\usepackage[all]{xy}
\usepackage{authblk}

\title{Laver Trees in the Generalized Baire Space}

\author{Yurii Khomskii\footnote{Amsterdam University College, Postbus 94160, 1090 GD Amsterdam, The Netherlands and Universi\"at Hamburg, Bundesstra{\ss}e 55, 20146 Hamburg, Germany.  \texttt{yurii@deds.nl}. Supported by the European Union’s Horizon 2020 research and innovation programme under the Marie Sk\l odowska-Curie grant agreement No 706219 (REGPROP)} , $\;$ Marlene Koelbing\footnote{Kurt G\"odel Research Center (KGRC), Universit\"at Wien, Oskar-Morgenstern-Platz 1, 1090 Vienna, Austria. \texttt{marlenekoelbing@web.de}.Supported by the \"OAW Doc fellowship.}, $\;$ \\  Giorgio Laguzzi\footnote{Albert-Ludwigs-Universit\"at Freiburg, Ernst-Zermelo Str. 1, 79104 Freiburg im Breisgau, Germany.
 \texttt{giorgio.laguzzi@libero.it } }, $\;$ Wolfgang Wohofsky\footnote{Kurt G\"odel Research Center (KGRC), Universit\"at Wien, Oskar-Morgenstern-Platz 1, 1090 Vienna, Austria.
 \texttt{wolfgang.wohofsky@gmx.at} Supported by the German Research Foundation (DFG) under Grant SP 683/4-1, and the Austrian Science Fund (FWF) under International Project number: I 4039 \medskip \\ This project was partially supported by the Isaac Newton Institute for Mathematical Sciences in the programme Mathematical, Foundational and Computational Aspects of the Higher Infinite (HIF) funded by EPSRC grant EP/K032208/1.}
}

%\affil[1]{Amsterdam University College}
%\affil[1]{Universit\"at Hamburg}
%\affil[2]{Universit\"at Wien}
%\affil[3]{Albert-Ludwigs-Universit\"at Freiburg}
%\affil[4]{Universit\"at Wien}

%%%%%%%%%%%%%%%%%%%%%%%%5
% General
%%%%%%%%%%%%%%%%%%%%%%%%

\newcommand{\p}{\medskip \noindent}

\newcommand{\cc}{{^\frown}}

\newcommand{\till}{{\upharpoonright}}
\newcommand{\thru}{\uparrow}

\newcommand{\CH}{{\rm CH}}
\newcommand{\GCH}{{\rm \sf GCH}}

\newcommand{\PI}{\boldsymbol{\Pi}}

\newcommand{\ww}{\omega^\omega}
\newcommand{\wlw}{\omega^{<\omega}}

\newcommand{\stem}{{\rm stem}}
\newcommand{\Succ}{{\rm Succ}}

\newcommand{\IP}{\mathbb{P}}

\newcommand{\s}[1]{\medskip \noindent \textbf{#1}}

\newcommand{\M}{\mathcal{M}}

\newcommand{\IL}{\mathbb{L}}

\newcommand{\cov}{{\rm cov}}

\newcommand{\bb}{\mathfrak{b}}

\newcommand{\kk}{\mathfrak{k}}

\newtheorem{Thm}{Theorem}[section]

\newtheorem{Lem}[Thm]{Lemma}
\newtheorem{Cor}[Thm]{Corollary}

\newtheorem{Question}[Thm]{Question}

\theoremstyle{definition}
\newtheorem{Def}[Thm]{Definition}

\newtheorem{Remark}[Thm]{Remark}

%%%%%%%%%%%%%%%%%%%%%%%%%%%
% Polarized
%%%%%%%%%%%%%%%%%%%%%%%%%%%%%

\newcommand{\T}{{\mathcal{T}}}

%\newcommand{\leqqq}{\leftrightarrow}
%\newcommand{\stem}{{\rm stem}}
%\newcommand{\cont}{{2^{\aleph_0}}}
%\newcommand{\cc}{{^\frown}}
%\newcommand{\till}{\upharpoonright}
%\newcommand{\thru}{\uparrow}
%\newcommand{\ww}{\omega^\omega}
%\newcommand{\wlw}{\omega^{<\omega}}
%\newcommand{\dw}{2^\omega}
%\newcommand{\dlw}{2^{<\omega}}

%%%%%%%%%%%%%%%%%%%%%%%%%
%Hausdorff
%%%%%%%%%%%%%%%%%%%%%%

%\newcommand{\TDic}{{\rm T\text{-}Dic}}

%%%%%%%%%%%%%%%%%%%%%%%
%MAD families
%%%%%%%%%%%%%%%%%%%%%%%%

\renewcommand{\kk}{\kappa^\kappa}
\newcommand{\klk}{\kappa^{{<}\kappa}}
\newcommand{\dk}{2^{\kappa}}
\newcommand{\dlk}{2^{{<}\kappa}}
\newcommand{\D}{{\mathcal{D}}}
\newcommand{\len}{{\rm len}}

\renewcommand{\T}{\mathfrak{T}}
\newcommand{\Txp}{\T_{\dot{x},p}}

\newcommand{\Tdq}{\T_{\dot{d},q}}

\newcommand{\xxg}{\dot{x}_{gen}}
\newcommand{\xg}{x_{gen}}
\renewcommand{\d}{{\dot{d}}}

\newcommand{\q}[1]{\ulcorner #1 \urcorner }

%%%%%%%%%%%
%%%%%%%%%%%
%%%%%%%%%%%
%%%%%%%%%%%
%%%%%%%%%%%

\renewcommand{\thru}{{\uparrow}}
\newcommand{\PIL}{\mathbb{PL}}
\renewcommand{\GCH}{{\sf GCH}}
\renewcommand{\CH}{{\sf CH}}

\begin{document}

\maketitle

\begin{abstract} We prove that any suitable generalization of Laver forcing  to the space $\kappa^\kappa$, for uncountable regular $\kappa$, necessarily adds a  Cohen $\kappa$-real. We also study a  dichotomy  and an ideal naturally related to generalized Laver forcing. Using this dichotomy, we prove the following stronger result: if $\klk=\kappa$, then every ${<}\kappa$-distributive tree forcing on $\kk$ adding a dominating $\kappa$-real which is the image of the generic under a continuous function in the ground model, adds a Cohen $\kappa$-real.  This is a contribution to the study of generalized Baire spaces and answers a question from  \cite{MontoyaEtAl}.
\end{abstract}

\noindent{Mathematics Subject Classification (2010).} [03E40, 03E17, 03E05]

\section{Introduction} \label{sectionIntroduction}

In set theory of the reals, a basic question  is whether a forcing adds  \emph{Cohen reals} or  \emph{dominating reals}. It is well-known that {Cohen forcing} adds Cohen but not dominating reals while {Laver forcing} does the opposite. In the language of {cardinal characteristics of the continuum}, this means that an appropriate iteration of Cohen forcing starting from  $\CH$ yields  a model where $ \bb < \cov(\M)$, while  an appropriate iteration of Laver forcing starting from   $\CH$ yields a model where  $ \cov(\M) <   \bb$. 

\bigskip
In recent years, the study of \emph{generalized Baire spaces} has caught the attention of an increasing number of  set theorists. For a regular, uncountable cardinal $\kappa$ one considers  elements of  $\kk$ or $\dk$ as ``$\kappa$-reals'' and looks at the corresponding space with the bounded topology. The \emph{generalized Cantor space} is defined analogously using $\dk$ and $\dlk$.

It is straightforward to generalize the above notions from the classical to the generalized Baire spaces. Thus, we have the concepts \emph{dominating $\kappa$-real} and the cardinal characteristic $\bb_\kappa$ (see Definition \ref{dom}). Likewise, we can define $\M_\kappa$ as the ideal of \emph{$\kappa$-meager} sets, i.e., those obtained by $\kappa$-unions of nowhere dense, giving rise to the cardinal characteristic $\cov(\M_\kappa)$ defined in the usual way. \emph{$\kappa$-Cohen forcing} is the partial order of  basic open sets ordered by inclusion. %Thus, we can talk about \emph{Cohen $\kappa$-real}, as well as the cardinal characteristics $\bb_\kappa$ and $\cov(\M_\kappa)$,  can be naturally generalized to this setting 

It is not hard to see that $\kappa$-Cohen forcing does not add dominating $\kappa$-reals, so an appropriate iteration of  $\kappa$-Cohen forcing, starting from a model of $\GCH$, yields a model in which $ \bb_\kappa  < \cov(\M_\kappa)$, mirroring the classical situation. A natural method for the converse direction, i.e., proving the  consistency of $\cov(\M_\kappa) < \bb_\kappa$, would be to iterate a forcing which adds dominating $\kappa$-reals but not Cohen $\kappa$-reals. The authors of   \cite[p.\ 36]{MontoyaEtAl}  asked whether a forcing with such a property existed, and in particular, whether some generalization of Laver forcing had this property. 

\bigskip
In this paper,  we  show that \emph{any}  generalization of Laver forcing necessarily adds a Cohen $\kappa$-real (Theorem \ref{Mainthm}). If we assume $\klk=\kappa$, then this holds for an even wider class of trees (Theorem \ref{Mainthm2}). Later, we use a dichotomy result and similar techniques to show  that if $\klk = \kappa$ and $\IP$ is any ${<}\kappa$-distributive forcing whose conditions are limit-closed trees on $\klk$, and which adds a dominating $\kappa$-real obtained as the image of the generic under a continuous function in the ground model, then $\IP$ necessarily adds a Cohen $\kappa$-real (Theorem \ref{MainThm3}). It is an open question whether there exists some other ${<}\kappa$-distributive and/or  ${<}\kappa$-closed forcing which adds dominating $\kappa$-reals but not  Cohen $\kappa$-reals (Question \ref{qqq}).\footnote{In an earlier version of this paper, we claimed  that every ${<}\kappa$-closed forcing adding dominating $\kappa$-reals adds Cohen $\kappa$-reals, but the proof contained a gap, so, to our knowledge, the question is still open.}

We should note that a model for $\cov(\M_\kappa) < \bb_\kappa$ was recently constructed by Shelah (private communication). However, Shelah's method was to start from a model of  $\cov(\M_\kappa) = \bb_\kappa = \dk > \kappa^+$ and  add a witness to $\cov(\M_\kappa) = \kappa^+$ by a short forcing iteration. It is therefore still open whether an alternative proof exists by using a forcing iteration starting from a model of $\GCH$ which adds dominating $\kappa$-reals and no Cohen $\kappa$-reals.

%, if $\IP$ is any (suitably defined) tree-like forcing on the generalized reals, adding a dominating real which is a continuous image of the generic, then this forcing must add a Cohen $\kappa$-real. %slight generalizations, we can extend this result to a wider class of forcingsIn fact, we show this for a slightly wider class of for Then, using similar techniques, we extend this result to cover a wider class of forcings -- those whose conditions are limit-closed trees which add a dominating $\kappa$-real.

%At this point we should note that, in an earlier version, we claimed to have a proof that \emph{any} ${<}\kappa$-closed forcing adding dominating $\kappa$-reals adds Cohen $\kappa$-reals. The proof contained an irreparable mistake, so for the time being this is still an open question, one which the authors consider rather important for the theory of generalized Baire spaces. %x, so for the time being this

\bigskip

When working in generalized Baire spaces, a common assumption is $\klk = \kappa$, which is sufficient to prove many pleasant properties of generalized Baire spaces, e.g., that the topology has a base of size $\kappa$. Nevertheless, our first main theorem (Theorem \ref{Mainthm}) is proved in generality and does not depend on this assumption, whereas the other main results (Theorem \ref{Mainthm2} and Theorem \ref{MainThm3}) do.

\bigskip The first main result is proved in Section \ref{game}. Motivated by the methods used there, in Section \ref{goldstern} we look at the ideal related to generalized Laver forcing and prove a somewhat surprising result concerning a generalization of the dichotomy for Laver forcing from \cite{GoldsternGame}. This dichotomy is used in  Section \ref{sectionbetter} to extend our first main result to arbitrary ${<}\kappa$-distributive tree forcings. % adding dominating $\kappa$-reals. % which are images of the generics to   include a wider range of forcing notions.

% %we show that for any ${<}\kappa$-closed forcing $\IP$, if $\IP$ adds a dominating $\kappa$-real then it 
%adds a Cohen $\kappa$-real, and that any forcing $\IP$ whose conditions consist of $\kappa$-Laver trees adds a Cohen $\kappa$-real. Note that in the context of generalized Baire spaces, the ${<}\kappa$-closure of forcings is a very natural condition, since it implies that the collection $\klk$ is not changed by forcing (the analogue to the fact that $\wlw$ is not changed by any forcing).

%\medskip
%The main results are presented in Sections  \ref{game} and \ref{moregame}. In section \ref{goldstern} we consider a generalization of a classical dichotomy theorem, %in Section \ref{Baire} we consider the ideal generated by the generalized Laver forcing and a related regularity property, 
%and in Section \ref{last} we prove an additional result and end with some questions.% the generalized Laver forcing is not $\omega$-distributive.% some additional results. 

\bigskip
\section{Preliminaries and definitions} \label{definitions}
 
We work in the setting where $\kappa$ is an uncountable, regular cardinal, and consider the \emph{generalized Baire space} $\kk$ with the bounded topology  generated by basic open sets of the form $[\sigma] := \{x \in \kk  :  \sigma \subseteq x\}$ for $\sigma \in \klk$. The  \emph{generalized Cantor space} $\dk$ is defined analogously. % the analogous topology. % For simplicity, when dealing with results that hold for both types of spaces, we will typically only mention the $\kk$-case. We will also frequently refer to elements of both spaces as $\kappa$-reals. %talk about $\kk$, but all the basic properties hold for $\dk$ as well.

We refer the reader to \cite{FriedmanHyttinenKulikov} for a good introduction to generalized Baire spaces, and to  \cite{OpenQuestions} for an overview of the current state of the field and a list of open problems.

%\bigskip

%The dominating and bounding numbers, and the meager ideal can be adequately generalized as follows:

\begin{Def} \label{dom}  Let $f,g \in \kk$. We say that $g$ \emph{dominates} $f$, notation $f \leq^* g$, iff $\exists \alpha_0 \: \forall \alpha > \alpha_0 \: (f(\alpha) \leq g(\alpha))$. The generalized bounding number $\bb_\kappa$ is defined as the least size of a family $F \subseteq \kk$ such that for all $g \in \kk$ there is $f \in F$ such that $f \not\leq^* g$. If $M$ is a model of set theory, then $d$ is a \emph{dominating $\kappa$-real} over $M$ if $d$ dominates every $f \in \kk \cap  M$.
\end{Def}

%\begin{Def} A set $A \subseteq \dk$ is \emph{nowhere-dense} if for every basic open $[\sigma]$ there exists a basic open $[\tau] \subseteq [\sigma]$ such that $ [\tau] \cap A = \varnothing$. A set $A \subseteq \dk$ is \emph{$\kappa$-meager} if it is contained in the union of $\kappa$-many nowhere-dense sets. The ideal of $\kappa$-meager sets is denoted by $\mathcal{M}_\kappa$.  Analogous definitions work for $\kk$ as well. \end{Def}

A \emph{tree} in $\klk$  is a subset closed under initial segments. If $T$ is a tree, we use $[T]$ to denote the set of branches (of length $\kappa$) through $T$, that is $[T] := \{x \in \kk \: : \: \forall \alpha < \kappa \; (x \till \alpha \in T)\}$. The same holds for trees in $\dlk$. For $\sigma \in T$ we use the notation $T \thru \sigma := \{\tau \in T \: : \: \sigma \subseteq \tau \; \lor \; \tau \subseteq \sigma\}$.  A tree $T \subseteq \klk$ is called \emph{limit-closed}\footnote{Other terminology used  is ``${<}\kappa$-closed'' and ``sequentially closed''.} if for any limit ordinal $\lambda<\kappa$ and any $\subseteq$-increasing sequence $\left<\sigma_\alpha \: : \: \alpha < \lambda \right>$ from $T$, the limit of the sequence $\sigma := \bigcup_{\alpha<\lambda}\sigma_\alpha$ is itself an element of $T$. We call a set $C$ \emph{superclosed} if $C = [T]$ for a limit-closed tree $T$. 

Every closed subset of $\kk$ is the set of branches through a tree but not necessarily a limit-closed tree, so one could say that being superclosed is a topologically stronger property than being closed. We will also need to consider sets of branches   of length shorter than $\kappa$. For any limit ordinal $\lambda < \kappa$  we use the notation $[T]_\lambda := \{\sigma \in \kappa^\lambda : \forall \alpha<\lambda \; (\sigma \till \alpha \in T)\}$. Notice that $T$ is limit-closed iff $[T]_\lambda \subseteq T$ for all limit ordinals $\lambda < \kappa$. %, iff $[T]_\lambda$ 

%\bigskip An essential component of this paper is generalized Laver trees, which is the following well-known combinatorial object in the classical setting: %As already mentioned, the canonical way to increase the cardinal invariant $\bb$ without increasing $\cov(\M)$ in the classical setting is using an iteration of Laver forcing starting from a model of CH.
%Recall the following combinatorial object in classical set theory of the reals:
 
\begin{Def} A \emph{Laver tree} is a tree $T \subseteq \wlw$ with the property that  for every $\sigma \in T$ extending $\stem(T)$, $|\Succ_T(\sigma)| = \omega$. \emph{Laver forcing} $\IL$ is the partial order of Laver trees ordered by inclusion. \end{Def}

Laver forcing adds dominating reals while satisfying the so-called  \emph{Laver property}, a well-known iterable property implying that no Cohen reals are added.  There have been several attempts in the literature to generalize Laver forcing to $\kk$. %The most straightforward abstraction leads to what we shall call ``basic generalized Laver forcing'' and denote by $\IL_\kappa$:

\begin{Def} \label{lavdef} A \emph{$\kappa$-Laver tree} is a tree $T \subseteq \klk$ which is \emph{limit-closed} and such that for every $\sigma \in T$ extending $\stem(T)$, $|\Succ_T(\sigma)| = \kappa$. Let $\IL_\kappa$ denote the partial order of all $\kappa$-Laver trees ordered by inclusion. \end{Def}

This partial order itself is not well-suited as a forcing on $\kk$ and has never been proposed as an option.\footnote{It is not hard to see that such a partial order would not be ${<}\kappa$-closed, and in fact not even  $\omega$-distributive. Compare this to a recent result of Mildenberger and Shelah \cite{MiShe} showing that a similarly ``plain'' version of $\kappa$-Miller forcing collapses $2^\kappa$ to $\omega$.} But there have been other attempts at generalizations of Laver forcing, usually by putting stronger requirements on ``splitting'' in the tree. For example,  \emph{club Laver forcing}  (see \cite{GeneralizedVadim}) consists of trees satisfying the additional condition ``$\Succ_T(\sigma)$ contains a club on $\kappa$'' for all $\sigma$ beyond the stem. This forcing is ${<}\kappa$-closed and adds a dominating $\kappa$-real, but it is easy to see that it also adds a Cohen $\kappa$-real: if $S$ is a stationary, co-stationary subset of $\kappa$ and  $\varphi: \kk \to \dk$ is given by $\varphi(x)(\alpha) = 1 \; \Leftrightarrow   \; x(\alpha) \in S$, then  $\varphi(\xg)$ is a Cohen $\kappa$-real. 

Yet another attempt  is  \emph{measure-one Laver forcing}, where the requirement is strengthened to ``$\Succ_T(\sigma) \in U$'' for some ${<}\kappa$-complete ultrafilter on a measurable cardinal $\kappa$. This forcing is also ${<}\kappa$-closed and adds a dominating $\kappa$-real, and until now it was not known whether it adds a Cohen $\kappa$-real.  Of course, one could think of further clever requirements on Laver trees in order to ensure that no Cohen $\kappa$-reals are added.

However, by the results of this paper, none of these approaches can work.  

\newcommand{\Cof}{{\rm Cof}}
\newcommand{\cf}{{\rm cf}}

\bigskip
\section{The Supremum Game} \label{game}

In this section we will prove our first main result. The  main ingredient of our proofs in this and subsequent sections is the following game.

\begin{Def} Let $S \subseteq \kappa$. The \emph{supremum game} $G^{\rm sup}(S)$ is played by two players, for $\omega$ moves, as follows:

$$\begin{array}{r||cccccc}
{\rm I} & A_0 & & A_1 & & \dots \\ \hline
{\rm II} & & \beta_0 & & \beta_1 & & \dots \end{array}$$
where  $A_n \subseteq \kappa$,  $|A_n| = \kappa$ and   $\beta_n \in A_n$ for all $n<\omega$.   Player II wins   iff $\sup \{ \beta_n : n < \omega \} \in S$. \end{Def}

\begin{Lem} \label{gamelemma} Let $S$ be a stationary subset of $\Cof_\omega(\kappa) = \{\alpha <\kappa \: : \: \cf(\alpha) = \omega\}$. Then Player I does not have a winning strategy in $G^{\rm sup}(S)$. \end{Lem}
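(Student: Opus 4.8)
The plan is to refute every strategy of Player~I by letting Player~II steer the supremum into $S$ with the help of a club of ``closure points''. Fix a strategy $\Sigma$ for Player~I. For a legal partial play $s=\langle\beta_0,\dots,\beta_{n-1}\rangle$ by Player~II (i.e.\ each $\beta_i$ belongs to the set $\Sigma$ prescribes at stage $i$), let $A_s\subseteq\kappa$ denote the set $\Sigma$ answers with at the next move; recall $|A_s|=\kappa$. The crucial starting observation is that, since $\kappa$ is regular, every $A_s$ has size $\kappa$ and is therefore \emph{unbounded} in $\kappa$, so Player~II can always keep playing arbitrarily high. The only genuine constraint is that the final supremum must land in $S$, and this is where stationarity enters.

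First I would set
$$C=\bigl\{\gamma<\kappa : \text{for every legal partial play } s\in\gamma^{<\omega}\text{ and every }\delta<\gamma,\ A_s\cap(\delta,\gamma)\neq\emptyset\bigr\},$$
the set of $\gamma$ such that $A_s\cap\gamma$ is cofinal in $\gamma$ for all partial plays $s$ with entries below $\gamma$. Closedness of $C$ is immediate: if $\gamma=\sup_i\gamma_i$ with each $\gamma_i\in C$, then any finite $s\in\gamma^{<\omega}$ and any $\delta<\gamma$ already lie below some $\gamma_i$, and $\gamma_i\in C$ provides a witness in $A_s\cap(\delta,\gamma_i)\subseteq A_s\cap(\delta,\gamma)$.

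The main work, and the step I expect to be the crux, is unboundedness of $C$. Given any $\gamma_n<\kappa$, there are fewer than $\kappa$ legal partial plays $s$ with entries below $\gamma_n$ (since $|\gamma_n|<\kappa$ and $\kappa$ is regular and uncountable); for each such $s$, unboundedness of $A_s$ lets me choose an element of $A_s$ above $\gamma_n$, and I let $\gamma_{n+1}<\kappa$ be a bound on all these choices, where regularity of $\kappa$ is exactly what keeps this supremum below $\kappa$. Setting $\gamma=\sup_n\gamma_n$, I claim $\gamma\in C$: for any legal $s\in\gamma^{<\omega}$ and $\delta<\gamma$ there is $n$ with $s\in\gamma_n^{<\omega}$ and $\delta<\gamma_n$, so the element chosen for $s$ at stage $n$ lies in $A_s\cap(\gamma_n,\gamma)\subseteq A_s\cap(\delta,\gamma)$. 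Iterating this construction above any prescribed ordinal shows $C$ is unbounded, hence a club.

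Finally, since $S$ is stationary I pick $\gamma\in S\cap C$. Because $\gamma\in S\subseteq\Cof_\omega(\kappa)$ we have $\cf(\gamma)=\omega$, so fix an increasing sequence $\langle\delta_k : k<\omega\rangle$ with $\sup_k\delta_k=\gamma$. Player~II now plays against $\Sigma$ by recursion: having produced a legal partial play $s=\langle\beta_0,\dots,\beta_{n-1}\rangle$ with all entries below $\gamma$, the condition $\gamma\in C$ (applied with $\delta=\delta_n<\gamma$) yields some $\beta_n\in A_s$ with $\delta_n<\beta_n<\gamma$, which extends the play legally and keeps all entries below $\gamma$. Along the resulting run every $\beta_n$ satisfies $\delta_n<\beta_n<\gamma$, so $\sup_n\beta_n=\gamma\in S$ and Player~II wins. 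As $\Sigma$ was an arbitrary strategy for Player~I, Player~I has no winning strategy, as required.
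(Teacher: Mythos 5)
Your proof is correct, and it is a genuinely different implementation of the same underlying idea as the paper's. The paper argues via elementary submodels: it takes $M \prec \mathcal{H}_\theta$ with $\sigma \in M$, $|M| < \kappa$ and $\delta = \sup(M \cap \kappa) \in S$ (possible since the set of such suprema contains a club), and lets Player~II climb cofinally to $\delta$ inside $M$, using elementarity to see that each $A_n \in M$ meets $M$ above any prescribed $\gamma_n \in M$. You instead construct the relevant club by hand: your $C$ is precisely the set of ``closure points'' of the strategy, and your unboundedness argument --- fewer than $\kappa$ legal partial plays with entries below a given bound (here $\kappa$ uncountable gives $|\gamma_n^{<\omega}| < \kappa$), one witness chosen per play, regularity of $\kappa$ to close off --- is exactly the bookkeeping that elementarity performs silently in the paper's version. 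Both proofs then exploit stationarity identically: pick $\gamma \in S \cap C$, use $\cf(\gamma) = \omega$ to fix a ladder $\langle \delta_k : k < \omega \rangle$, and have Player~II play above $\delta_n$ but below $\gamma$ at each stage, so the supremum is forced to equal $\gamma \in S$. What your route buys is a bare-hands ZFC argument with no appeal to $\mathcal{H}_\theta$ or L\"owenheim--Skolem, making visible exactly where $|A_s| = \kappa$ (unboundedness of each answer set) and the regularity of $\kappa$ enter; what the submodel route buys is brevity, since ``$A_s \in M$ is unbounded in $\sup(M \cap \kappa)$'' comes for free from elementarity. One small virtue of your write-up worth keeping: your closedness check for $C$ works for limits of arbitrary cofinality below $\kappa$, not just $\omega$-limits, which is what is needed for $C$ to be a genuine club.
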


\begin{proof} Let $\sigma$ be a strategy for Player I in $G^{\rm sup}(S)$. We will show that $\sigma$ is not a winning strategy. Let $\theta$ be sufficiently large and let $M \prec \mathcal{H}_\theta$ be an elementary submodel such that $\sigma \in M$, $|M| < \kappa$, and $\delta:= \sup(M \cap \kappa) \in S$. Note that we can always do that, because the set $\{\sup(M \cap \kappa)  :  M \prec \mathcal{H}_\theta, \; \sigma \in M,\; |M| < \kappa\}$ contains a club.

\p Fix a  sequence $\left<\gamma_n \: : \: n < \omega\right>$ cofinal in $\delta$, such that every $\gamma_n \in M$ (but the sequence itself is not). Inductively, Player II will construct a run of the game according to strategy $\sigma$. 

\p At each step $n$, inductively assume  $A_k$ and $\beta_k$ for $k<n$ have been fixed according to the rules of the game and the strategy $\sigma$, and assume they are all in $M$. Let $A_n := \sigma(A_0, \beta_0, \dots, A_{n-1}, \beta_{n-1})$. Since the finite sequence was in $M$ and the strategy $\sigma$ is in $M$,  $A_n$ is also in $M$. Furthermore, since $|A_n| = \kappa$, the following statement is true:
$$\exists \beta > \gamma_n \; (\beta \in A_n).$$
This statement holds in  $\mathcal{H}_\theta$, so by elementarity, it also holds in $M$. Thus, there exists $\beta_n \in M$ with $\beta_n > \gamma_n$ and $\beta_n \in A_n$. This completes the  construction.

\p We have produced a sequence $\left<\beta_n \: : \: n< \omega\right>$ with $\beta_n \in M$ for all $n$. But clearly  $\sup_n \beta_n = \sup_n \gamma_n = \delta \in S$, so Player II wins this game, proving that the strategy was not winning for Player I.
\end{proof}

\begin{Def} \label{short} A \emph{short $\kappa$-Laver tree} is a tree $L \subseteq \kappa^{<\omega}$ (i.e., height $\omega$), such that for all  $ \sigma \in L$ extending $\stem(L)$ we have $ |\Succ_L(\sigma)| = \kappa$. \end{Def}

\begin{Cor} \label{ccor} Let $S \subseteq \kappa$ be a stationary subset of $\Cof_\omega(\kappa)$. For every short $\kappa$-Laver tree $L$ there exists a branch $\eta \in [L]_\omega$ such that $\sup_n \eta(n) \in S$. %and a branch $y \in [T]$ such that $\sup_n y(n) \in S_1$. 
\end{Cor}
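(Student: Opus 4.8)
The plan is to derive this directly from Lemma \ref{gamelemma} by reading off a strategy for Player~I from the tree $L$. Intuitively, a run of the supremum game is exactly the process of choosing successors step by step along a branch of a short $\kappa$-Laver tree: Player~I offers the (size-$\kappa$) successor set of the current node, and Player~II picks the next value. The only thing to watch is the stem, since the hypothesis $|\Succ_L(\sigma)|=\kappa$ only holds for $\sigma$ extending $\stem(L)$.

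First I would fix $s:=\stem(L)$, say of length $k$, and set $m:=\sup\{s(i)+1 : i<k\}<\kappa$. I then define a strategy for Player~I as follows: given Player~II's previous moves $\beta_0,\dots,\beta_{n-1}$, let $\sigma_n:=s\cc\langle\beta_0,\dots,\beta_{n-1}\rangle$ and let Player~I play $A_n:=\Succ_L(\sigma_n)\setminus m$. An easy induction shows $\sigma_n\in L$ and $\sigma_n$ extends the stem, so $|\Succ_L(\sigma_n)|=\kappa$; since we delete fewer than $\kappa$ ordinals (those below $m$), we still have $|A_n|=\kappa$, so this is a legal strategy for Player~I.

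Now I would invoke Lemma \ref{gamelemma}: as $S$ is a stationary subset of $\Cof_\omega(\kappa)$, this strategy is not winning for Player~I. Hence there is a run $\langle A_0,\beta_0,A_1,\beta_1,\dots\rangle$ consistent with the strategy in which Player~II wins, i.e.\ $\sup_n\beta_n\in S$. Setting $\eta:=s\cc\langle\beta_0,\beta_1,\dots\rangle$, each initial segment of $\eta$ is one of the nodes $\sigma_n\in L$, so $\eta\in[L]_\omega$. Finally, because every $\beta_n\geq m>s(i)$ for all $i<k$, the finitely many stem values do not affect the supremum, giving $\sup_n\eta(n)=\sup_n\beta_n\in S$, as required.

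The argument is short once the correspondence is set up; the only genuine point requiring care — the main obstacle — is handling the stem, which is the reason for subtracting $m$ in the definition of $A_n$. Without this adjustment, the winning supremum $\sup_n\beta_n$ produced by the lemma could in principle be smaller than some stem value $s(i)$, in which case $\sup_n\eta(n)$ would equal a stem value rather than $\sup_n\beta_n$ and might fail to lie in $S$. Everything else is routine bookkeeping about branches through trees.
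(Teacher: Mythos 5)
Your proof is correct and follows essentially the same route as the paper: the tree induces a strategy for Player~I in the supremum game, and Lemma~\ref{gamelemma} yields a run won by Player~II whose moves trace out the desired branch $\eta \in [L]_\omega$. Your extra care with the stem --- playing $A_n := \Succ_L(\sigma_n)\setminus m$ --- handles a point that the paper's proof passes over silently (it plays the full successor sets $\Succ_L(\stem(L)\cc\langle\beta_0,\dots,\beta_n\rangle)$, which happens to be harmless in the paper's applications since the short trees there have empty stem, and which could in general also be repaired by applying Lemma~\ref{gamelemma} to the stationary set $S\setminus(\max_{i<k}s(i)+1)$ instead of trimming the moves).
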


\begin{proof} The short $\kappa$-Laver tree $L$ induces a strategy $\sigma_L$ for Player I in the supremum game:  $$\sigma_L(A_0, \beta_0, \dots, A_n, \beta_n) := \Succ_L(\stem(L) \cc \left<\beta_0, \dots, \beta_n\right>).$$ Whenever $\left<A_0, \beta_0, A_1, \beta_1, \dots \right>$ is a run of the game according to $\sigma_L$, $\stem(L) \cc \left<\beta_0, \beta_1, \dots \right>$ is an element of $[L]_\omega$. % branch through $$.

\p By Lemma \ref{gamelemma}, there exists a run  of the game in which Player I follows   $\sigma_L$ but Player II wins. This yields a branch $\eta \in [L]_\omega$ such that $\sup_n \eta(n) \in S$. \end{proof}

With this, we immediately obtain our main result.

\begin{Thm}[Main Theorem 1] \label{Mainthm} Let $\IP$ be any forcing whose conditions are $\kappa$-Laver trees $($i.e., $\IP \subseteq \IL_\kappa)$  and which is closed under the following condition: if $T \in \IP$ and $\sigma \in T$, then $T \thru \sigma \in  \IP$. Then $\IP$ adds a Cohen $\kappa$-real. \end{Thm}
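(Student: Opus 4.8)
The plan is to build, in the ground model, a continuous function $\varphi \colon \kk \to \dk$ from a fixed stationary and co-stationary set $S \subseteq \Cof_\omega(\kappa)$, and to show that $\varphi(\xg)$ is a $\kappa$-Cohen real over $V$, where $\xg$ is the generic branch. First I would fix a \emph{thin} cofinal set $P \subseteq \Cof_\omega(\kappa)$ of size $\kappa$, meaning no $\lambda \in P$ is a limit point of $P$ (for instance $P = \{\delta + \omega : \delta < \kappa\}$, the limit ordinals that are not limits of limit ordinals), and let $\langle \lambda_\xi : \xi < \kappa \rangle$ enumerate $P$ increasingly. I then set, for $x \in \kk$ and $\xi < \kappa$, the bit $\varphi(x)(\xi) = 0$ if $\sup\{x(\alpha) : \alpha < \lambda_\xi\} \in S$, and $\varphi(x)(\xi) = 1$ otherwise. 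Since $\varphi(x)(\xi)$ depends only on $x \till \lambda_\xi$, the map $\varphi$ is continuous and total, so $\varphi(\xg)$ is a name for an element of $\dk$. To conclude it is Cohen it suffices to show that for every open dense $D \subseteq \dlk$ lying in $V$ and every condition $T \in \IP$ there are $T' \leq T$ in $\IP$ and $p \in D$ with $T' \Vdash p \subseteq \varphi(\xg)$.

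The engine is Corollary~\ref{ccor}, which lets me realize a single Cohen bit with \emph{either} prescribed value. Given a condition whose stem $\sigma$ already extends $\stem(T)$ and has length $\beta$, every node of $T$ above $\sigma$ splits into $\kappa$ successors by Definition~\ref{lavdef}, so the set $L$ of finite sequences $\tau$ with $\sigma \cc \tau \in T$ is a short $\kappa$-Laver tree in the sense of Definition~\ref{short}. Writing $S_0 = S$ and $S_1 = \Cof_\omega(\kappa) \setminus S$, I apply Corollary~\ref{ccor} to the (still stationary) set $S_b \setminus (\sup \ran \sigma + 1)$ to obtain $\eta \in [L]_\omega$ with $\mu := \sup_n \eta(n) \in S_b$ and $\mu > \sup \ran \sigma$. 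By limit-closure of $T$ the extended stem $\sigma \cc \eta$ is again a node of $T$, and $\sup\{(\sigma \cc \eta)(\alpha) : \alpha < \beta + \omega\} = \mu \in S_b$, so the bit read off at the coding position $\beta + \omega$ equals $b$. Thus extending the stem by one block of length $\omega$ forces the corresponding Cohen bit to whichever value we wish.

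I would then assemble these single-bit moves into a recursion along $P$. Starting from $T$, let $p_0$ be the initial segment of $\varphi(\xg)$ already decided by $\stem(T)$; by density pick $p \in D$ with $p \supseteq p_0$, and build an increasing sequence of stems realizing $p$ one coordinate at a time, applying the previous paragraph at successor steps and taking unions at limit steps. The main obstacle is exactly the limit steps: the union of the stems so far reaches some length $\beta$, and if $\beta$ were a coding position its bit would be determined by the earlier choices rather than prescribed by $p$, destroying the control. This is precisely what the thinness of $P$ is designed to prevent: at a limit step $\beta = \sup_{\eta < \xi} \lambda_\eta$ is a limit point of $P$, hence $\beta \notin P$ and no bit is coded there; limit-closure of $T$ guarantees the union stem is a node of $T$ (so the recursion continues), while closure of $\IP$ under $T \mapsto T \thru \sigma$ guarantees that the final tree $T' = T \thru \sigma^*$, for the terminal stem $\sigma^*$, is a condition of $\IP$.

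Finally, since $T'$ forces $\sigma^* \subseteq \xg$ and $\varphi(\xg) \till \dom(p)$ depends only on $\xg \till \lambda_{\dom(p)} \subseteq \sigma^*$, we obtain $T' \Vdash p \subseteq \varphi(\xg)$ with $p \in D$. As $D$ was an arbitrary ground-model open dense subset of $\dlk$, this shows $\varphi(\xg)$ meets every such $D$, so it is $\kappa$-Cohen over $V$. I expect the only genuinely delicate points to be the bookkeeping at limit stages (handled by the choice of $P$ together with limit-closure) and checking that each single-bit move stays inside $T$ and that the final restriction stays inside $\IP$; the use of Corollary~\ref{ccor} to steer a supremum into either $S$ or its complement is the crux, replacing the naive ``$x(\alpha) \in S$'' coding that is available only for club-splitting trees.
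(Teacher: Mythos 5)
Your proof is correct and takes essentially the same route as the paper: the same stationary/co-stationary coding of Cohen bits via suprema of $\omega$-blocks, with Corollary \ref{ccor} (the supremum game) steering each block's sup into the desired piece, limit-closure of $T$ handling limit stages of the stem recursion, and the closure of $\IP$ under $T \mapsto T \thru \sigma$ producing the final condition. The only (harmless) implementation difference is that the paper reads bit $\alpha$ off the sup of the isolated block $\{x(\omega\cdot\alpha+n) : n<\omega\}$, so it needs neither your thin set $P$ of coding positions nor the shift of $S_b$ above $\sup\ran(\sigma)$, whereas your whole-initial-segment sups require exactly the shifting trick you correctly include.
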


\begin{proof}

We will use the following notation: if $T \in \klk$ is a tree and $\sigma \in T$, then $T \till^\omega \sigma := \{\tau \in \kappa^{{<}\omega}  :  \sigma \cc \tau \in T\}.$ Note that if $T$ is a $\kappa$-Laver tree, then for every $\sigma \in T$ extending $\stem(T)$, $T \till^\omega \sigma$ is a short $\kappa$-Laver tree (with empty stem).

\p
Let $S_0 \cup S_1$ be a stationary/co-stationary partition of  $\Cof_\omega(\kappa)$ and consider the mapping
$ \varphi: \kk \; \rightarrow \; \dk$ defined by $$\varphi(x)(\alpha) = 1 \;\; :\Leftrightarrow \; \; \sup\{x(\omega {\cdot} \alpha + n) \: : \: n<\omega\} \in S_1.$$ In other words, partition $x$ into $\kappa$-many blocks of length $\omega$, and map each piece to $0$ or $1$ depending on whether its supremum lies in $S_0$ or $S_1$. We claim that if $\xg$ is $\IP$-generic then $\varphi(\xg)$ is $\kappa$-Cohen-generic.

\p We use $\tilde{\varphi}: \klk \to \dlk$ to denote the approximations of $\varphi$ (defined as above). Let $T \in \IP$ be given and let $D$ be open dense in $\kappa$-Cohen forcing. Let $\sigma := \stem(T)$, w.l.o.g. $\len(\sigma)$ is a limit ordinal. Let $t \in D$ extend $\tilde{\varphi}(\sigma)$. Suppose  $\tilde{\varphi}(\sigma) \cc \left<0\right> \subseteq t$. By Corollary \ref{ccor} there  is $\eta \in [T \till^\omega  \sigma]_\omega$ such that $\sup_n \eta(n) \in S_0$. If, instead, we have $\tilde{\varphi}( \sigma) \cc \left<1\right> \subseteq t$, we can apply Corollary \ref{ccor} and find a branch $\mu \in [T \till^\omega \sigma]_\omega$ such that $\sup_n \mu(n) \in S_1$. Note that, since $T$ is limit-closed,   $\sigma \cc \eta$ resp. $\sigma \cc \mu$ are elements of $T$. Now proceed analogously until reaching $\tau$, such that $\tilde{\varphi}(\tau) = t$. By assumption $T \thru \tau \in \IP$, and now clearly $T \thru \tau \Vdash \tau \subseteq \xxg$ and therefore $T \thru \tau \Vdash t \subseteq \varphi(\xxg)$. Thus  $\varphi(\xg)$ is  a Cohen $\kappa$-real.\end{proof}

Another way of looking at the above proof is as follows: the sets $\{\eta \in \kappa^\omega : \sup_n \eta(n) \in S_0\}$ and $\{\eta \in \kappa^\omega : \sup_n \eta(n) \in S_1\}$ form Bernstein sets with respect to short $\kappa$-Laver trees in $\kappa^{<\omega}$. Note that due to cardinality reasons, we cannot use standard diagonalization arguments to produce such sets.

\bigskip If we additionally assume $\klk = \kappa$, we can obtain an even stronger theorem. 

\begin{Def} \label{pseudo} A tree $T \subseteq \klk$ is called a \emph{pseudo-$\kappa$-Laver tree} if it is limit-closed and has the following property: every $ \sigma \in T$ has an extension $\tau \in T$ such that $ T \till^\omega \tau$ is a short $\kappa$-Laver tree. We use $\PIL_\kappa$ to denote the partial order of pseudo-$\kappa$-Laver trees ordered by inclusion. \end{Def}

%Clearly every $\kappa$-Laver tree is also pseudo-$\kappa$-Laver, but not vice versa. %Nevertheless, we can prove a similar result even for this wider class.

\begin{Thm}[Main Theorem 2] \label{Mainthm2} Assume $\klk=\kappa$. Let $\IP$ be any forcing whose conditions are pseudo-$\kappa$-Laver trees $($i.e., $\IP \subseteq \PIL_\kappa)$  and which is closed under the following condition:  if $T \in \IP$ and $\sigma \in T$, then $T \thru \sigma \in  
\IP$. Then $\IP$ adds a Cohen $\kappa$-real. \end{Thm}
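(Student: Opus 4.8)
The plan is to reduce Theorem \ref{Mainthm2} to the same combinatorial core used for Theorem \ref{Mainthm}, namely Corollary \ref{ccor}, but adapted to the weaker splitting structure of pseudo-$\kappa$-Laver trees. The key observation is that although a pseudo-$\kappa$-Laver tree $T$ need not split fully above its stem, Definition \ref{pseudo} guarantees that from \emph{any} node $\sigma \in T$ we can reach an extension $\tau \in T$ whose ``short cone'' $T \till^\omega \tau$ is an honest short $\kappa$-Laver tree. So the first step is to show that I can still steer the generic branch so that the supremum of each length-$\omega$ block lands in the side of the partition I want. Concretely, given the target $t \in D$ and a node already built, I first pass to such a $\tau$ using the pseudo-Laver property, apply Corollary \ref{ccor} to $T \till^\omega \tau$ to obtain $\eta \in [T \till^\omega \tau]_\omega$ with $\sup_n \eta(n)$ in the prescribed $S_i$, and then (since $T$ is limit-closed) append $\eta$ to land in $T$ at a node one block longer in the controlled region.

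The main subtlety, and the reason the $\klk=\kappa$ hypothesis is needed, is bookkeeping about where the blocks of the map $\varphi$ fall. In Theorem \ref{Mainthm} the stem could be taken of limit length and the full splitting above it made the block boundaries line up cleanly; here the detours through $\tau$ insert uncontrolled segments of $x$ between the controlled $\omega$-blocks. I would therefore redefine the partition so that $\varphi$ reads off a supremum only over designated blocks and ignores (or absorbs into a neutral value) the ``filler'' coming from the detours, or alternatively arrange the enumeration so that each controlled block begins exactly at an ordinal of the form $\omega\cdot\alpha$. The role of $\klk=\kappa$ is to ensure the $\kappa$-Cohen forcing has a base of size $\kappa$ and that the relevant continuous approximation $\tilde\varphi$ is well-behaved, so that deciding finitely many (i.e. block-by-block) values of $\varphi(\xg)$ suffices to meet any given open dense $D$.

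The heart of the argument is the density construction: given $T \in \IP$ and $D$ open dense in $\kappa$-Cohen forcing, build a $\subseteq$-increasing sequence of nodes $\sigma = \tau_0 \subseteq \tau_1 \subseteq \cdots$ in $T$, one controlled block at a time, so that $\tilde\varphi(\tau_k)$ climbs toward some $t \in D$ with $\tilde\varphi(\sigma) \subseteq t$. At each stage, if the next Cohen bit prescribed by $t$ is $i$, I use the pseudo-Laver property to find an extension whose short cone is a short $\kappa$-Laver tree, then invoke Corollary \ref{ccor} with the stationary set $S_i$ to force that block's supremum into $S_i$, and glue on the resulting branch using limit-closedness. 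After finitely many blocks I reach a node $\tau$ with $\tilde\varphi(\tau) = t$; by the closure hypothesis $T \thru \tau \in \IP$, and $T \thru \tau \Vdash \tau \subseteq \xxg$, hence $T \thru \tau \Vdash t \subseteq \varphi(\xxg)$ with $t \in D$.

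The step I expect to be the main obstacle is precisely the alignment of the detour segments with the block structure of $\varphi$: I must verify that the filler introduced when moving from $\sigma$ to $\tau$ does not corrupt the already-decided Cohen bits nor the supremum of a controlled block. I would handle this by making $\varphi$ depend only on a sparse, explicitly indexed family of $\omega$-blocks and treating the intervening coordinates as irrelevant, so that continuity of $\varphi$ still yields a genuine $\kappa$-Cohen real while the combinatorial freedom of Corollary \ref{ccor} remains available at each controlled block. Once this alignment is set up correctly, the rest of the proof is a verbatim adaptation of the proof of Theorem \ref{Mainthm}.
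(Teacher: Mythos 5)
There is a genuine gap at the heart of your density construction. You propose to realize a pre-chosen target $t \in D$ one bit per controlled block, claiming that ``after finitely many blocks'' you reach $\tau$ with $\tilde\varphi(\tau) = t$. But conditions in $\kappa$-Cohen forcing are elements of $\dlk$ of arbitrary ordinal length ${<}\kappa$, not finite strings, so the construction would require $\len(t)$-many stages, including limit stages. Worse, the stages do not compose: between any two controlled blocks you must pass (via Definition \ref{pseudo}) to an extension $\tau$ whose cone $T \till^\omega \tau$ is a short $\kappa$-Laver tree, and this detour has uncontrolled length, so it inserts $\omega$-blocks whose $\varphi$-bits you cannot prescribe. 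Hence the realized string is never the pre-chosen $t$; retargeting to a new $t' \in D$ by density after each corruption yields a process that makes one bit of progress and is then derailed again, with no fusion argument to guarantee that anything in $D$ is met at a limit. You correctly identify this alignment problem as the main obstacle, but your proposed repair --- letting $\varphi$ read only a ``designated sparse family'' of blocks and ignore filler --- does not go through as stated: $\varphi$ must be a single continuous function fixed in the ground model, whereas the positions and lengths of the detours depend on the particular condition $T \in \IP$ and on $D$, so $\varphi$ has no way to recognize which blocks are controlled; with only the two-set partition $S_0 \cup S_1$ there is also no room to code positional markers into the real itself. No mechanism closing this circle is given, and this is exactly the point where a new idea is needed.

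The paper's proof avoids iteration altogether, and this is also where the hypothesis $\klk = \kappa$ is really used (not, as you suggest, for the size of the base of the topology or good behavior of $\tilde\varphi$). Since $\klk = \kappa$, one can partition $\Cof_\omega(\kappa)$ into $\kappa$-many disjoint stationary sets $\{S_t : t \in \klk\}$ indexed by \emph{all strings}, and define $\pi(x) := t_0 \cc t_1 \cc t_2 \cc \dots$, where $\sup\{x(\omega \cdot \alpha + n) : n < \omega\} \in S_{t_\alpha}$. A single controlled $\omega$-block then encodes an arbitrary string rather than one bit, so given $T$ and dense $D$ one needs exactly one detour and one application of Corollary \ref{ccor}: find $\sigma \in T$ with $T \till^\omega \sigma$ a short $\kappa$-Laver tree, and only \emph{then} choose $t \in D$ with $\tilde\pi(\sigma) \subseteq t$ --- choosing the target after the detour makes all filler harmless, since it is absorbed into $\tilde\pi(\sigma)$. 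Writing $t = \tilde\pi(\sigma) \cc u$, Corollary \ref{ccor} applied with $S_u$ yields $\eta \in [T \till^\omega \sigma]_\omega$ with $\sup_n \eta(n) \in S_u$, whence $\tilde\pi(\sigma \cc \eta) = t$ and $T \thru (\sigma \cc \eta) \Vdash t \subseteq \pi(\xxg)$. Your overall instinct --- reduce to Corollary \ref{ccor} via the pseudo-Laver property and limit-closure --- is the right one, but without the string-indexed partition the block-by-block version cannot be completed.
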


\begin{proof} The method is similar, except that now  we let $\{S_t \: : \: t \in \klk\}$ be a  partition of $\Cof_\omega(\kappa)$ into $\kappa$-many disjoint stationary sets, which we index   by $\klk$. This is possible due to the assumption $\klk = \kappa$. Define the mapping $\pi: \kk \to \dk$ by $\pi(x) := t_0 \cc t_1 \cc t_2 \cc \dots$,
where for all $\alpha<\kappa$, $t_\alpha$ is such that $\sup\{	x(\omega \cdot \alpha + n) \: : \: n<\omega\} \in S_{t_\alpha}$. We also use $\tilde{\pi}$ to denote the same operation but from $\klk$ to $\dlk$.

% $\{S_\xi \: : \: \xi < \kappa\}$ be a disjoint partition of $\Cof_\omega(\kappa)$ into stationary subsets, and define another mapping. First let $ \psi: \kk \; \rightarrow \; \kk$ be defined by $$\psi(x)(\alpha) = \xi \;\; :\Leftrightarrow \; \; \sup\{x(\omega {\cdot} \alpha + n) \: : \: n<\omega\} \in S_\xi.$$ Next, fix an enumeration $\{\sigma_i \; : i < \kappa\}$ of $\dlk$ and let $\chi: \kk \; \rightarrow \; \dk$ be defined by
%$$\chi(y) := \sigma_{y(0)} \cc \sigma_{y(1)} \cc \sigma_{y(2)} \cc \dots$$ Finally, let $\pi := \chi \circ \psi$. Also, we define the approximations $\tilde{\psi}$, $\tilde{\chi}$ and $\tilde{\pi}$ in the straightforward way. We claim that if $x$ is $\IP$-generic, then $\pi(x)$ is $\kappa$-Cohen generic.

\p Let $\xg$ be the $\IP$-generic $\kappa$-real;  we show that  $\pi(\xg)$ is $\kappa$-Cohen. Let $D$ be open dense in $\kappa$-Cohen forcing, and let $T \in \IP$. Find $\sigma \in T$ such that $T \till^\omega \sigma$ is a short $\kappa$-Laver tree. Let $t \in D$ be such that $\tilde{\pi}(\sigma) \subseteq t$. Let $u$ be such that $\tilde{\pi}(\sigma) \cc u = t$. By Corollary \ref{ccor} there is $\eta \in [T \till^\omega \sigma]_\omega$ such that $\sup_n \eta(n) \in S_u$. It follows that $\tilde{\pi}(\sigma \cc \eta) = \tilde{\pi}(\sigma) \cc u =  t$. Therefore $T \thru (\sigma \cc \eta) \Vdash t \subseteq \pi(\xxg)$. \end{proof}

\bigskip
\section{The generalized Laver dichotomy} \label{goldstern}

The supremum game and the arguments from Theorem \ref{Mainthm} naturally lead us to consider a question in generalized descriptive set theory (this connection is explained in Remark \ref{relevance}). %In addition,  in the following section we will use these results to draw some further consequences for forcings adding dominating $\kappa$-reals. 

We need  the  following strengthening of the concept of a \emph{dominating real}, which has  been studied in the classical context in \cite{GoldsternGame, LabedzkiRepicky, RepickyDeco, FilterLaver}.

\begin{Def}  For $f: \klk \to \kappa$ and $x \in \kk$, we say that \emph{$x$ strongly dominates $f$} if $\exists \alpha_0 \; \forall \alpha > \alpha_0 \;(x(\alpha) \geq f(x \till \alpha))$. If $M$ is a model of set theory with the same $\klk$, then $x$ is called \emph{strongly dominating over $M$} if for all $f: \klk \to \kappa$ with $f \in M$, $x$ strongly dominates $f$.
\end{Def}

Clearly, if $x$ is strongly dominating, then it is  also dominating. The converse is false in general, e.g., let $d$ be dominating over $M$ and let $x$ be defined by $x(\alpha) := d(\alpha)$ for odd $\alpha$ and $x(\alpha) := d(\alpha+1)$ for even and limit $\alpha$. Then $x$ is dominating but not strongly dominating. However, the following is true:

\begin{Lem} \label{strongly} Assume  $\klk=\kappa$. Let $M$ be a model of set theory such that $\klk \cap M  = \klk$. Then, if there is a dominating $\kappa$-real over $M$ there  is also a strongly dominating $\kappa$-real over $M$.  \end{Lem}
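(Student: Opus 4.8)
The plan is to build the strongly dominating real by a single \emph{uniform} transformation of the dominating real, rather than by diagonalising over the functions $f\colon\klk\to\kappa$ of $M$ (of which there may be more than $\kappa$ many, so a naive length-$\kappa$ bookkeeping is hopeless). First I would fix, inside $M$, a bijection $c\colon\klk\to\kappa$; this is available since $M$ contains all short sequences ($\klk\cap M=\klk$) and, like $V$, computes $\klk=\kappa$. Let $d\in\kk$ be dominating over $M$. Then I define $x\in\kk$ by the transfinite recursion $x(\alpha):=d\bigl(c(x\till\alpha)\bigr)$. This is legitimate, because $x(\alpha)$ depends only on the initial segment $x\till\alpha$ (and at a limit $\lambda$ the segment $x\till\lambda$ is just the union of the previously defined values); moreover $x(\alpha)<\kappa$ for every $\alpha$, so $x\in\kk$.

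Next I would verify that this single $x$ strongly dominates every $f\colon\klk\to\kappa$ lying in $M$. Given such an $f$, set $h_f:=f\circ c^{-1}\colon\kappa\to\kappa$. Since $c,f\in M$ we have $h_f\in M$, so $d$ dominates $h_f$: there is $\xi_0$ with $d(\xi)\geq h_f(\xi)$ for all $\xi>\xi_0$. The whole point of the recursion is that it rewrites both sides of the strong-domination inequality through the code: for every $\alpha$ we have $f(x\till\alpha)=h_f\bigl(c(x\till\alpha)\bigr)$ while $x(\alpha)=d\bigl(c(x\till\alpha)\bigr)$. Hence it suffices to guarantee that $c(x\till\alpha)>\xi_0$ for all sufficiently large $\alpha$.

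The key remaining observation — and the only place regularity of $\kappa$ enters — is that $c(x\till\alpha)\to\kappa$ as $\alpha\to\kappa$. Indeed, the set $B:=\{s\in\klk:c(s)\leq\xi_0\}=c^{-1}(\xi_0+1)$ has cardinality $|\xi_0|+1<\kappa$, so by regularity the lengths of its elements are bounded by some $\beta_0<\kappa$; therefore for every $\alpha\geq\beta_0$ the sequence $x\till\alpha$, having length $\alpha\geq\beta_0$, cannot lie in $B$, whence $c(x\till\alpha)>\xi_0$. Combining, for all $\alpha\geq\beta_0$ we obtain $x(\alpha)=d\bigl(c(x\till\alpha)\bigr)\geq h_f\bigl(c(x\till\alpha)\bigr)=f(x\till\alpha)$, so $x$ strongly dominates $f$; as $f\in M$ was arbitrary, $x$ is strongly dominating over $M$. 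I expect the combinatorial heart — the bound $\beta_0$ — to be immediate once one sets it up, so the only genuine point requiring care (the ``main obstacle'', though a mild one) is the bookkeeping that the coding bijection $c$ and hence each $h_f$ genuinely live in $M$, i.e.\ the correct use of the hypothesis $\klk\cap M=\klk$ together with $\klk=\kappa$.
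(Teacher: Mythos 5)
Your proof is correct and is essentially the paper's own argument: the paper fixes a bijection between $\klk$ and $\kappa$ in $M$, transfers $d$ to a function $d^*\colon\klk\to\kappa$ dominating every $f\in M$ on all but ${<}\kappa$-many sequences (your $d\circ c$), and defines $e(\alpha):=d^*(e\till\alpha)$, which is exactly your recursion $x(\alpha):=d(c(x\till\alpha))$. The only difference is that you spell out the final verification --- using regularity of $\kappa$ to bound the lengths of the ${<}\kappa$-many exceptional sequences --- which the paper leaves implicit with ``Then $e$ is strongly dominating.''
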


\begin{proof} Let $d$ be the dominating $\kappa$-real, and fix a bijection between $\klk$ and $\kappa$ in $M$. We can define a new dominating $\kappa$-real $d^*:\klk \to \kappa$, i.e., such that for  every $f: \klk \to \kappa$ in $M$, $f(\sigma) \leq d^*(\sigma)$ holds for all but ${<}\kappa$-many $\sigma \in \klk$. Now define inductively $$e(\alpha) := d^*(e \till \alpha).$$ 

\p Then $e$ is strongly dominating. \end{proof}

%\begin{proof} In $M$, let $\{\sigma_i \: : \: i < \kappa\}$ enumerate $\klk$, writing $\lceil\sigma\rceil = i$ iff $\sigma = \sigma_i$. If  $d$ is dominating over $M$, define inductively  $x(\alpha) := d(\lceil x \till \alpha \rceil)$. If $f: \klk \to \kappa$ is in $M$, then $z$ defined by $z(i) := f(\sigma_i)$ is also in $M$ (since the enumeration is in $M$), hence, for all but ${<}\kappa$-many $i$ we have $z(i) < d(i)$. Hence, for all but ${<}\kappa$-many $\alpha$ we have $x(\alpha) = d(\lceil x \till \alpha \rceil) > z( \lceil x \till \alpha \rceil) = f(x \till \alpha)$. 

%\p If the enumeration used above is such that  $\sigma \subseteq \tau$ implies $ \lceil\sigma\rceil \leq \lceil \tau \rceil$, then the mapping $x$ defined inductively from $d$ is easily seen to be continuous. \end{proof}

\begin{Def} A collection $X \subseteq \kk$ is  a \emph{strongly dominating family} if for every $f:\klk \to \kappa$ there exists $x \in X$ which strongly dominates $f$. $\mathcal{D}_\kappa$ denotes the ideal of all $X \subseteq \kk$ which are \emph{not} strongly dominating families.\end{Def}

For $\kappa = \omega$, the ideal $\mathcal{D}_\omega = \mathcal{D}$ is the well-known \emph{non-strongly-dominating ideal}, introduced in   \cite{GoldsternGame} and independently in \cite{Za04}, and studied among others in \cite{RepickyDeco}. The main interest in it stems from a perfect-set-like dichotomy theorem for Laver trees.

\begin{Thm}[Goldstern et al.\  \cite{GoldsternGame}]  If $T \subseteq \wlw$ is a Laver tree then $[T] \notin \D$. Every  analytic set $A \subseteq \ww$ is either in $\D$ or contains $[T]$ for some Laver tree $T$. In particular, there is a dense embedding from the order of  Laver trees into the  algebra of Borel subsets of $\ww$ modulo $\D$. \end{Thm}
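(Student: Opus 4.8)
The plan is to prove the three assertions in turn, the middle one (the dichotomy) being the substantial one. For the first, that $[T] \notin \D$ for every Laver tree $T$, I would argue directly by a greedy branch construction. Given any $f : \wlw \to \omega$, start at $\stem(T)$ and, at each node $\sigma \in T$ already chosen, pick a successor $\sigma \cc \langle i \rangle \in T$ with $i \geq f(\sigma)$; this is possible because $|\Succ_T(\sigma)| = \omega$, so the successors are cofinal in $\omega$. The resulting branch $x \in [T]$ satisfies $x(n) \geq f(x \till n)$ for all $n \geq \len(\stem(T))$, hence $x$ strongly dominates $f$. As $f$ was arbitrary, $[T]$ is a strongly dominating family, i.e.\ $[T] \notin \D$.

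For the dichotomy, fix a tree $S$ on $\omega \times \omega$ with $A = \{x : \exists y\,(x,y) \in [S]\}$. I would introduce a two-player \emph{domination game} $G(A)$ in which the players cooperatively build $x,y \in \ww$ together with a sequence of challenges: at round $n$, Player~I first plays $m_n \in \omega$, and then Player~II plays $(x(n),y(n))$ subject to $(x \till (n{+}1), y \till (n{+}1)) \in S$ and $x(n) \geq m_n$. Player~II wins iff every round admits a legal move, equivalently $(x,y) \in [S]$ and hence $x \in A$. The point of unfolding through $S$ is that the winning condition for~II is now a \emph{closed} condition, so by the Gale--Stewart theorem $G(A)$ is determined in $\ZFC$ --- crucially we avoid appealing to analytic determinacy. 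If Player~II has a winning strategy $\tau$, I would build a Laver tree $T \subseteq A$ by recursion, attaching to each $\sigma \in T$ a finite run consistent with $\tau$ producing $\sigma$ as its $x$-part: given the run attached to $\sigma = x \till n$, letting the next challenge $m_n$ range over $\omega$ forces II's legal reply $x(n) \geq m_n$ to take cofinally many values, and choosing one such reply for infinitely many values furnishes the successors of $\sigma$. Every node past the stem then splits infinitely, so $T$ is a Laver tree, and each branch carries a genuine infinite run consistent with $\tau$, hence lies in $A$; thus $A \supseteq [T]$ and in particular $A \notin \D$.

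The hard part is the converse: if Player~I has a winning strategy $\sigma$, I must manufacture a \emph{single} $f : \wlw \to \omega$ witnessing $A \in \D$, so that no $x \in A$ strongly dominates $f$. The obstruction is twofold. First, the challenge $\sigma$ issues at stage $n$ depends not only on $x \till n$ but also on the witness $y \till n$, whereas $f$ must depend on $x \till n$ alone, and one cannot simply take a supremum over all compatible $y \till n$ since it may be unbounded. Second, the game demands domination at \emph{every} coordinate, while membership in $\D$ concerns only \emph{eventual} domination, so the single failure $x(n) < m_n$ extracted along a run is a priori too weak. I expect this to be the crux. I would resolve it by first proving the dichotomy for closed $A$ (where there is no witness $y$ and the definition $f(x \till n) := \sigma(x \till n)$ works verbatim, and where a game variant letting Player~I postpone or reset the threshold finitely often converts ``eventual'' domination into a full-domination run contradicting that $\sigma$ wins), and then reflecting from the closed set $[S] \subseteq \ww \times \ww$ down to its projection $A$.

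Finally, for the dense embedding statement, the map $T \mapsto [[T]]_{\D}$ sending a Laver tree to the $\D$-class of $[T]$ is order preserving (if $T' \subseteq T$ then $[T'] \subseteq [T]$) and, by the first assertion, lands in the nonzero elements. Two Laver trees are compatible in $\IL$ precisely when $[T_0] \cap [T_1]$ again contains a Laver tree, which by the dichotomy is equivalent to $[T_0] \cap [T_1] \notin \D$, i.e.\ to the images being compatible in the quotient; this makes the map an embedding. Density is exactly the dichotomy: any analytic (in particular Borel) set representing a nonzero element of $\mathrm{Borel}/\D$ is not in $\D$, hence contains some $[T]$, whose class lies below it.
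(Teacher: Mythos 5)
First, a point of order: the paper does not prove this theorem at all --- it is quoted from \cite{GoldsternGame} --- so your attempt can only be measured against the source and against the paper's closest in-house relative, the closed/tree case at the level of $\kappa$ (Lemma \ref{pseudogame}). Measured that way, much of your proposal is sound. The greedy branch construction for $[T] \notin \D$ is correct. The extraction of a Laver tree from a winning strategy for Player II in the unfolded game is the standard argument and you carry it out correctly: since II's reply must satisfy $x(n) \geq m_n$, the replies at a fixed position are unbounded, the response tree splits infinitely past its stem, and each branch carries an infinite run into $[S]$, hence lands in $A = p[S]$. The dense-embedding paragraph is also fine granted the dichotomy (using that every node of a Laver tree lies on a branch, so $[T'] \subseteq [T_0] \cap [T_1] = [T_0 \cap T_1]$ forces $T' \subseteq T_0 \cap T_1$). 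For the closed case, your ``postpone or reset the threshold'' game variant is vaguer than necessary: the clean device is to localize the game at \emph{every} node $s$, and, if Player I wins all localized games with strategies $\sigma_s$, to amalgamate them into a single $f$ by taking at each $t$ the maximum of the relevant values $\sigma_s(\cdot)$ over the \emph{finitely many} initial segments $s \subseteq t$; a point $x \in A$ eventually dominating $f$ then defeats $\sigma_s$ for a sufficiently long $s \subseteq x$. This is exactly the amalgamation performed in the paper's $\kappa$-analogue, Lemma \ref{pseudogame}, where the maximal limit node below $t$ plays the role of the localization point.

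The genuine gap is the analytic case of the smallness horn, which you dispose of with the phrase ``reflecting from the closed set $[S] \subseteq \ww \times \ww$ down to its projection $A$.'' That is not an argument, and the obvious readings fail for precisely the two obstructions you yourself identify. If you code $[S]$ as a closed subset of a Baire space by interleaving coordinates and apply your closed dichotomy there, neither horn transfers: $A$ being strongly dominating does \emph{not} make the interleaved $[S]$ strongly dominating (the witness digits are unconstrained and may all be $0$), so the dichotomy for $[S]$ can come out on the small side while $A \notin \D$; conversely, a tree splitting infinitely in both coordinates projects to a Laver tree $T'$ with $p[T] \subseteq A$ but possibly $[T'] \not\subseteq A$, since not every branch of the projected tree need lift to a branch through $S$. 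What is actually required is an \emph{asymmetric} product dichotomy --- domination and infinite splitting demanded only in the first coordinate, together with a coherent monotone assignment $t \mapsto u_t$ of witness nodes to nodes of the Laver tree, with $\len(u_t)$ unbounded along branches, so that every branch of the projected tree lifts into $[S]$ --- and establishing this is the real content of the analytic case. It is exactly here that your two obstructions resurface and block the naive fixes: Player I's challenges in the unfolded game depend on the witness part, each $t$ is compatible with infinitely many witness positions, so the supremum defining $f$ can be unbounded, and no finite-max amalgamation over localized strategies survives the unfolding (even a diagonal enumeration of the countably many pairs $(t,u) \in S$ fails, since a branch of $[S]$ can pass exclusively through late-enumerated nodes). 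In the literature this horn is handled by a genuinely different and more involved fusion/derivative construction through the tree $S$, not by a formal reflection from the closed case. So: your plan is correct for the first assertion, the II-wins horn, and the embedding, and repairable for closed sets, but the closed-to-analytic step --- the heart of the cited theorem --- is missing.
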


Dichotomies such as this one are common in classical descriptive set theory, the most notable example being the perfect set property and the closely related $K_\sigma$-dichomoty (\cite{KechrisDichotomy}), all of which are false for arbitrary sets of reals but true for analytic sets. Interest in generalizing such dichotomies to the $\kk$-context was recently spurred by a result of  Schlicht \cite{SchlichtPSP} showing that the generalized perfect set property for generalized projective sets is consistent, and  L\"ucke-Motto Ros-Schlicht \cite{LuckeHurewicz}  showing that the   generalized Hurewicz dichotomy   for generalized projective sets is consistent. Thus, it might initially seem surprising that the generalized Laver dichotomy fails for closed sets, provably in ZFC.

\begin{Thm} \label{mainprop} There is a closed subset of $\kk$ which is neither in $\D_\kappa$ nor contains the branches of a generalized Laver tree. \end{Thm}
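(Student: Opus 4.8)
The plan is to produce the counterexample explicitly and verify the two required properties using the supremum game in its two opposite guises. Fix a partition $\Cof_\omega(\kappa) = S \cup S'$ into two \emph{stationary} sets (possible by the classical splitting theorem for stationary subsets of a regular uncountable cardinal), and set
$$C := \{x \in \kk : \forall \alpha < \kappa \;\; \sup\{x(\omega{\cdot}\alpha + n) : n < \omega\} \in S\}.$$
Thus $C$ consists of exactly those $x$ all of whose $\omega$-blocks (under the same block decomposition $\kappa = \bigsqcup_{\alpha<\kappa}[\omega{\cdot}\alpha, \omega{\cdot}\alpha+\omega)$ used in Theorem \ref{Mainthm}) have supremum in $S$. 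I would first observe that $C$ is closed: the constraint on block $\alpha$ depends only on the coordinates in that block, so any $x \notin C$ is separated from $C$ by the basic open neighborhood $[x\till(\omega{\cdot}\alpha+\omega)]$ for a witnessing $\alpha$, whence the complement of $C$ is open.

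Next I would show $C \notin \D_\kappa$, i.e.\ that $C$ is a strongly dominating family. Given $f:\klk \to \kappa$, I build $x \in C$ strongly dominating $f$ by recursion on the block index $\alpha$. Having fixed $x\till(\omega{\cdot}\alpha)$, the task of choosing the $\omega$ values of block $\alpha$ so that simultaneously $x(\omega{\cdot}\alpha+n) \geq f(x\till(\omega{\cdot}\alpha+n))$ for every $n$ and $\sup_n x(\omega{\cdot}\alpha+n) \in S$ is precisely a run of the game $G^{\rm sup}(S)$ in which Player~I plays the size-$\kappa$ sets $A_n := \{\xi < \kappa : \xi \geq f(x\till(\omega{\cdot}\alpha+n))\}$. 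By Lemma \ref{gamelemma} this (indeed any) strategy of Player~I is not winning, so a suitable block exists. Running the recursion through all $\alpha < \kappa$ causes no trouble at limit stages, since $\omega{\cdot}\alpha = \sup_{\beta<\alpha}\omega{\cdot}\beta$, and produces $x \in C$ with $x(\gamma) \geq f(x\till\gamma)$ for \emph{every} $\gamma < \kappa$ (as every $\gamma$ has the form $\omega{\cdot}\alpha+n$); hence $x$ strongly dominates $f$ with $\alpha_0 = 0$.

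For the second property I would show that $C$ contains $[T]$ for no $\kappa$-Laver tree $T$. Given such $T$ with $\stem(T)$ of length $\ell$, choose a block boundary $\omega{\cdot}\alpha \geq \ell$ and any node $\sigma \in T$ of length $\omega{\cdot}\alpha$; since $\sigma$ extends the stem, $T\till^\omega\sigma$ is a short $\kappa$-Laver tree. Applying Corollary \ref{ccor} to the stationary set $S'$, there is $\eta \in [T\till^\omega\sigma]_\omega$ with $\sup_n \eta(n) \in S'$, hence $\notin S$. By limit-closure of $T$ we have $\sigma\cc\eta \in T$, so it extends to a full branch $x \in [T]$, and the $\alpha$-th block of $x$ is exactly $\eta$, with supremum outside $S$. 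Thus $x \notin C$, giving $[T] \not\subseteq C$.

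The heart of the argument, and the only delicate point, is the symmetric use of the game in the two directions: Lemma \ref{gamelemma} lets me steer block-suprema \emph{into} $S$ while dominating $f$ (the strongly-dominating direction), and Corollary \ref{ccor} applied to the complement $S'$ forces an escaping branch of any $\kappa$-Laver tree (the no-Laver direction). Both require $S$ and $S'$ to be stationary, which is exactly why the stationary/co-stationary split is indispensable. Conceptually, the construction works because $C$ need only be closed — no limit-closure is imposed on its defining tree — so it can be strongly dominating, whereas the limit-closure that a $\kappa$-Laver tree must satisfy is precisely what lets Corollary \ref{ccor} insert a sup in $S'$ along one of its $\omega$-blocks and thereby eject a branch from $C$.
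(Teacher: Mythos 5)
Your proof is correct and follows essentially the same route as the paper: your set $C$ is exactly the paper's $\varphi^{-1}[\{z\}]$ (all $\omega$-block suprema landing in one stationary piece $S$ of a stationary/co-stationary partition of $\Cof_\omega(\kappa)$), and your two verifications match the paper's, which uses Corollary \ref{ccor} with $S_1$ to eject a branch from any $\kappa$-Laver tree and the auxiliary Laver tree $T_f$ together with the same block-by-block game argument to show $C$ is strongly dominating. Your only deviation is cosmetic: you invoke Lemma \ref{gamelemma} directly with the strategy induced by $f$ instead of passing through $T_f$ and Corollary \ref{ccor}, which is precisely how that corollary is derived anyway.
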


\begin{proof} Let  $\varphi$ be as in the proof of Theorem \ref{Mainthm}. Let $z$ be the constant $0$ function (or any other fixed element of $ \dk$). We show that $C := \varphi^{-1}[\{z\}]$ is a counterexample to the dichotomy. Given any $T \in \IL_\kappa$, we can easily find $x \in [T]$ such that $\varphi(x) \neq z$, therefore $[T] \not\subseteq C$. We claim that $C$ is strongly dominating. Let $f: \klk \to \kappa$ be given. Let
$$T_f := \{\sigma \in \klk \: : \: \forall \beta < \len(\sigma) \; (\sigma(\beta) \geq f(\sigma \till \beta))\}.$$
Clearly $T_f$ is a generalized Laver tree and  $\stem(T_f) = \varnothing$. As in the proof of Theorem \ref{Mainthm},  we can find  $x \in [T_f]$ such that $\varphi(x) = z$. But then $x$ strongly dominates $f$ and $x \in C$, completing the argument. 
\end{proof}

\begin{Remark} \label{relevance} The relevance of this lemma  is that it explains why Theorem \ref{Mainthm} does not  (as one might initially assume) yield a ZFC-proof of $\bb_\kappa \leq \cov(\M_\kappa)$. Indeed, it is not hard to verify that $\cov(\D_\kappa) = \bb_\kappa$ and that   if $X \in 
\M_\kappa$  then  $\varphi^{-1}[X]$ does not contain a $\kappa$-Laver tree. Thus, if the dichotomy would hold for generalized Borel (or just $F_\sigma$) sets then  one could have concluded  $\bb_\kappa = \cov(\D_\kappa) \leq \cov(\M_\kappa)$.
\end{Remark}

%By adding isolated points, one can clearly extend $C$ from the above proof to a superclosed set, for which the dichotomy will likewise fail. 
One could wonder whether there is \emph{any} dichotomy for the ideal $\D_\kappa$, i.e., whether there is any collection $\IP$ of limit-closed trees, such that for every $T \in \IP$, $[T] \notin \D_\kappa$, and every analytic (or at least closed) set not in $\D_\kappa$ contains $[T]$ for some $T \in \IP$. In fact, this is not the case either.

%In fact, one can easily see that the  set $C = \varphi^{-1}[\{z\}]$ doesn't contain the branches of a  pseudo-$\kappa$-Laver tree, or, in fact, any tree $T$ which is limit-closed and such that $\exists s \in T \; (T \till^{\omega} s$ contains a short $\kappa$-Laver tree). By the next lemma (which uses the game argument from \cite{GoldsternGame}), we can actually show that there can be no dichotomy for the ideal $\D_\kappa$ involving limit-closed trees.

\begin{Lem}  \label{pseudogame} Let $T \subseteq \klk$ be a tree such that $[T]$ is strongly dominating. Then there exists $s \in T$ such that $T \till^\omega s$ contains a short $\kappa$-Laver tree. \end{Lem}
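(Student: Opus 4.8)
The plan is to prove the contrapositive. Assume that for every $s \in T$ the short tree $T \till^\omega s$ contains no short $\kappa$-Laver tree, and construct a function $f \colon \klk \to \kappa$ that is strongly dominated by no branch of $[T]$; this shows $[T] \in \D_\kappa$, contradicting the hypothesis that $[T]$ is a strongly dominating family. Recall that $x$ fails to strongly dominate $f$ exactly when there are cofinally many $\alpha < \kappa$ with $x(\alpha) < f(x \till \alpha)$, so it suffices to arrange that every $x \in [T]$ \emph{dips below $f$} inside each of the $\kappa$-many blocks $[\omega \cdot \xi, \omega \cdot(\xi+1))$. I would therefore reduce everything to a single-block claim: if $U \subseteq \kappa^{<\omega}$ is a tree containing no short $\kappa$-Laver tree, then there is $g \colon U \to \kappa$ such that the subtree $U_g := \{\tau \in U : \forall k < \len(\tau)\ (\tau(k) \geq g(\tau \till k))\}$ has no branch in $[U]_\omega$.

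To prove the single-block claim I would run the natural ``$\kappa$-splitting derivative'' on $U$: set $V^{(0)} = U$, let $V^{(\alpha+1)}$ consist of those $\sigma \in V^{(\alpha)}$ with $|\Succ_{V^{(\alpha)}}(\sigma)| = \kappa$, and take intersections at limits. Equivalently, assign each $\sigma \in U$ a rank $\rho(\sigma)$, where $\rho(\sigma) \geq \alpha + 1$ iff $\sigma$ has $\kappa$-many successors $\tau \in U$ with $\rho(\tau) \geq \alpha$. The key observation is that $U$ contains a short $\kappa$-Laver tree iff some node has rank $\infty$: if $\rho(\sigma) = \infty$ then, since $U$ has at most $\kappa$ nodes and the ordinal ranks occurring among the successors of $\sigma$ are thus bounded below $\kappa^+$, $\sigma$ must in fact have $\kappa$-many successors of rank $\infty$; hence the set of rank-$\infty$ nodes is self-supporting and yields a short $\kappa$-Laver tree. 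By hypothesis, then, $\rho$ is everywhere ordinal-valued. Now put $g(\sigma) := \sup\{\beta + 1 : \sigma \cc \langle \beta \rangle \in U,\ \rho(\sigma \cc \langle \beta \rangle) \geq \rho(\sigma)\}$. By definition of $\rho$, only $<\kappa$ successors of $\sigma$ retain rank $\geq \rho(\sigma)$, so by regularity of $\kappa$ this supremum is $< \kappa$ and $g$ maps into $\kappa$; moreover any $\sigma \cc \langle \beta \rangle \in U$ with $\beta \geq g(\sigma)$ satisfies $\rho(\sigma \cc \langle \beta \rangle) < \rho(\sigma)$. Hence the rank strictly decreases along every chain of $U_g$, so $U_g$ is well-founded and has no infinite branch, proving the claim.

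To assemble $f$, partition $\kappa$ into the blocks $[\omega \cdot \xi, \omega \cdot(\xi+1))$. For $\eta \in \klk$ with $\len(\eta) = \omega \cdot \xi + k$, let $s := \eta \till (\omega \cdot \xi)$ and let $\tau \in \kappa^{<\omega}$ be the last $k$ entries of $\eta$; if $s \in T$ set $f(\eta) := g_s(\tau)$, where $g_s$ is the function from the claim applied to $U_s := T \till^\omega s$ (which contains no short $\kappa$-Laver tree by assumption), and otherwise set $f(\eta) := 0$. Now fix any $x \in [T]$ and any $\xi < \kappa$, and put $s = x \till (\omega \cdot \xi) \in T$. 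The sequence $\eta_\xi := \langle x(\omega \cdot \xi + k) : k < \omega \rangle$ lies in $[U_s]_\omega$, so by the claim it is not a branch of $(U_s)_{g_s}$; that is, there is $n$ with $x(\omega \cdot \xi + n) < g_s(\eta_\xi \till n) = f(x \till (\omega \cdot \xi + n))$. Thus $x$ dips below $f$ in every block, hence cofinally in $\kappa$, so $x$ does not strongly dominate $f$. As $x$ was arbitrary, $[T]$ is not a strongly dominating family, completing the contrapositive.

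The main obstacle is precisely the place where the hypothesis must be used. Unlike in $\omega^{<\omega}$, a single ordinal threshold $g(\sigma) < \kappa$ cannot by itself discard all but boundedly many successors of a node whose successor set is unbounded in $\kappa$, so a naive level-by-level bounding argument fails. The rank/derivative is what repairs this: it measures how robustly a node keeps $\kappa$-splitting, and the absence of a short $\kappa$-Laver tree is exactly the statement that this rank is everywhere an ordinal, which is what lets the threshold force a strict rank drop at each step. The one technical point requiring care is the equivalence between $V^{(\infty)} \neq \varnothing$ and the existence of a short $\kappa$-Laver tree — specifically the assertion that a rank-$\infty$ node has $\kappa$-many rank-$\infty$ successors — which rests on the regularity of $\kappa$ together with $|T \till^\omega s| = \kappa$.
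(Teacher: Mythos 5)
Your proof is correct, but it takes a genuinely different route from the paper's. The paper argues game-theoretically: it introduces the game $G^{\star}(A)$ in which Player I plays ordinals $\alpha_n<\kappa$, Player II responds with $\beta_n\geq\alpha_n$, and II wins iff $\langle\beta_n : n<\omega\rangle\in A$; a winning strategy for II yields a short $\kappa$-Laver tree inside $A$, and since each $[T\till^\omega s]_\omega$ is closed in $\kappa^\omega$, the game is determined (Gale--Stewart), so under the contrapositive hypothesis Player I has winning strategies $\sigma_s$, which are then amalgamated into a single $f:\klk\to\kappa$ by splitting each node at its maximal limit-length initial segment --- exactly your block decomposition $\len(t)=\omega\cdot\xi+k$ --- and a strongly dominating branch of $[T]$ would defeat some $\sigma_s$, a contradiction. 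You replace the appeal to determinacy of closed games on uncountable move sets by an explicit $\kappa$-splitting derivative: the absence of a short $\kappa$-Laver tree forces the rank to be ordinal-valued everywhere, and your threshold $g(\sigma)$ is ${<}\kappa$ by regularity (a node of ordinal rank has ${<}\kappa$ successors of at least its own rank) and forces strict rank descent, hence well-foundedness of $U_g$; in effect you unwind the Gale--Stewart proof for this particular closed game, and your $g_s$ is a positional counterpart of the paper's strategy $\sigma_s$, with the final per-block assembly coinciding with the paper's amalgamation. What your route buys is self-containedness (no determinacy black box) plus the extra information that Player I wins with a tactic depending only on the current position; the paper's route is shorter and stays closer to the classical argument of Goldstern et al. Two small points to tighten in a final write-up: the set of rank-$\infty$ nodes above a rank-$\infty$ node $\sigma_0$ need not be downward closed (rank does not propagate to ancestors, e.g., through a non-splitting node), so the short $\kappa$-Laver tree should be generated recursively from $\sigma_0$ by repeatedly adjoining the $\kappa$-many rank-$\infty$ successors, rather than taken as the set of all rank-$\infty$ nodes; and $f(\eta)$ needs a default value in the case $s\in T$ but the finite tail $\tau\notin U_s$ --- harmless, since $f$ is only ever evaluated along initial segments of branches of $[T]$, where this case does not arise.
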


\begin{proof} We use a slightly modified version of the game  from    \cite{GoldsternGame}. Given $A \subseteq \kappa^\omega$ let $G^{\star}(A)$ be the game defined by:

$$\begin{array}{r||cccccc}
{\rm I} & \alpha_0 & & \alpha_1 & & \dots \\ \hline
{\rm II} & & \beta_0 & & \beta_1 & & \dots \end{array}$$
where  $\alpha_n, \beta_n < \kappa$, $\alpha_n \leq \beta_n$ for all $n$, and  Player II wins   iff $\left<\beta_n \: : \: n<\omega\right> \in A$. 

\p
It is easy to see that if Player II has a winning strategy in $G^{\star}(A)$ then there exists a short $\kappa$-Laver tree $L$ (with empty stem) such that $[L]_\omega \subseteq A$. Also it is well-known and easy to see that if $A$ is closed (in the topology on $\kappa^\omega$) then  $G^{\star}(A)$ is determined.

\p Suppose, towards contradiction, that there is no $s \in T$ such that $T \till^\omega s$ contains a short $\kappa$-Laver tree. Then Player II does not have a winning strategy in $G^{\star}([T \till^\omega s]_\omega)$ for any $s \in T$, and therefore Player I has a winning strategy, call it $\sigma_s$. Define $f: \klk \to \kappa$ as follows: for every $t \in T$, let $s \subseteq t$ be the maximal node of limit length, let $u$ be such that $t = s \cc u$,  and define $f(t) := \sigma_{s}(u)$. Since $[T]$ is strongly dominating there is $x \in [T]$ and $\alpha$ such that $x(\beta) \geq f(x \till \beta)$ for all $\beta>\alpha$. In particular, there is $s \subseteq x$, of limit length, such that $x(|s| + n) \geq f(x \till (|s| + n))$ for all $n<\omega$. Letting $z \in \kappa^\omega$ be such that $s \cc z  = x \till (|s|+\omega)$, we see that $z(n) \geq f(s \cc z \till n) = \sigma_s(z \till n)$, for every $n$. Also $z \in [T \till^\omega s]_\omega$, therefore $z$ satisfies the winning conditions for Player II in the game $G^{\star}([T \till^\omega s]_\omega)$, contradicting the assumption that $\sigma_s$ was a winning strategy for Player I. \end{proof}

%\begin{Cor}  If $T$ is a limit-closed tree  such that for every $\sigma \in T$, $[T \thru \sigma]$ is strongly dominating, then $T$ is a pseudo-$\kappa$-Laver tree.
%\end{Cor}

%The next corollary shows that we cannot have a perfect-set-like dichotomy for the ideal $\D_\kappa$ in terms of limit-closed trees (super-closed sets).

\begin{Cor} \label{corzonneveld} There exists a closed strongly dominating set without a super-closed strongly dominating subset. \end{Cor}

\begin{proof} Consider again the closed set $C := \varphi^{-1}[\{z\}]$ from the proof of  Theorem \ref{mainprop}. Towards contradiction suppose there is a limit-closed tree $T$ such that $[T] \subseteq C$ and $[T]$ is strongly dominating. Without loss of generality, we may assume that $T$ is \emph{pruned}, in the sense that for every $s \in T$ there is a proper extension $t \in T$.

\p By Lemma \ref{pseudogame} there is $s \in T$ such that  $T \till^\omega s$ contains a short $\kappa$-Laver tree $L$. By Corollary \ref{ccor} there is $\eta \in [L]_\omega$ such that $\sup_n \eta(n) \in S_1$, and by limit-closure, $s \cc \eta \in T$. Moreover, since $T$ is limit-closed and pruned, there is $x \in [T]$ such that $s \cc \eta \subseteq x$. But then $\varphi(x)$ contains a ``$1$'' and thus is not equal to $z$, the constant $0$-function, contradicting the assumption. \end{proof}

Lemma \ref{pseudogame}, whose proof is based on the game argument from   \cite{GoldsternGame}, will be an important ingredient in the following section.

\bigskip
\section{${<}\kappa$-distributive tree forcings} \label{sectionbetter}

We would like to generalize the results from  Section \ref{game}  about Laver trees to a wider class of forcing notions. Recall that a forcing $\IP$ is \emph{${<}\kappa$-closed} if for every decreasing sequence of conditions of length ${<}\kappa$, there is a condition below all of them.  A forcing $\IP$ is \emph{${<}\kappa$-distributive} if the intersection of ${<}\kappa$-many open dense sets is open dense. Since  ${<}\kappa$-distributive forcings do not add new elements of $\klk$, it is a  natural class to consider  in the context of generalized Baire spaces (after all, forcing in the ordinary Baire space does not add new finite sequences). If a forcing is ${<}\kappa$-closed, then it is ${<}\kappa$-distributive, although the converse does not hold. One interesting difference between the two, in the context of generalized descriptive set theory, is that generalized-$\boldsymbol{\PI}^1_1$-absoluteness holds between ${<}\kappa$-closed forcing extensions (see \cite[Lemma 2.7]{GeneralizedVadim}), while it may fail for ${<}\kappa$-distributive forcing extensions. In this sense, the most natural question is the following:

\begin{Question} \label{qqq} Is it true that every ${<}\kappa$-distributive forcing adding a dominating $\kappa$-real adds a Cohen $\kappa$-real? Is it at least true for every ${<}\kappa$-closed forcing? \end{Question}

Although we cannot  answer this question in generality, we can answer the question for ${<}\kappa$-distributive forcings whose conditions are limit-closed trees, and such that a dominating $\kappa$-real can be defined from the generic by a ground-model continuous function. More generally, this holds whenever the \emph{interpretation tree} of the dominating $\kappa$-real is limit-closed.

In this section, we will always assume that $$\klk = \kappa.$$

% Notice that if a forcing is ${<}\kappa$-closed then it does not add new elements of $\klk$. So in the context of generalized Baire spaces, it is natural to focus attention on ${<}\kappa$-closed forcings (after all, forcings on the ordinary Baire space does not changes  $\wlw$).  We start with the following:

%The dichotomy in the previous section shows the importance in distinguishing  between sets which are merely \emph{closed} (i.e., $[T]$ for a tree $T$) vs. those that are also \emph{super-closed} ($[T]$ for a limit-closed tree $T$). This distinction will become essential in this section. %very important when we try to generalize the results from Section \ref{game} to a wider class of forcing notions. In this section we show that under some conditions, the results of Section \ref{game} can be generalized, i.e., for $\IP$ satisfying certain conditions, we know that  if $\IP$ adds a dominating $\kappa$-real than $\IP$ adds a Cohen $\kappa$-real.

\begin{Def}  Let $\IP$ be any forcing notion, let $\dot{x}$ be a name, and let $p \in \IP$ be such that $p \Vdash \dot{x} \in \kk$. Then the \emph{interpretation tree} of $\dot{x}$ below $p$ is defined by:
$$\T_{\dot{x},p} = \{\sigma \in \klk \: : \: \exists q \leq p \; (q \Vdash \sigma \subseteq \dot{x})\}.$$
\end{Def}

It is clear that $\Txp$ is always a tree in the ground model, but in general it need not be a limit-closed tree.  %First we prove a general result about arbitrary ${<}\kappa$-closed forcings adding (strongly) dominating reals. 

\begin{Lem} \label{lemma1} Suppose $\IP$ is a ${<}\kappa$-distributive forcing, and suppose $p \Vdash $ ``$\dot{d}$ is a strongly dominating $\kappa$-real''. Additionally, assume that for every $q \leq p$, the interpretation tree $\Tdq$ is limit-closed. Then  $p \Vdash $ ``there is a Cohen $\kappa$-real''. \end{Lem}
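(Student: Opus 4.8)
The plan is to show that $p$ forces $\pi(\d)$ to be a Cohen $\kappa$-real, where $\pi\colon\kk\to\dk$ is the map from the proof of Theorem~\ref{Mainthm2}, built from a partition $\{S_t : t\in\klk\}$ of $\Cof_\omega(\kappa)$ into $\kappa$ many stationary sets indexed by $\klk$ (available since $\klk=\kappa$), with $\tilde\pi$ its approximation. The point of the hypotheses is that the interpretation tree $\Tdq$ transports the tree combinatorics of Corollary~\ref{ccor} and Lemma~\ref{pseudogame} into the abstract forcing $\IP$.

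First I would record two facts, valid for every condition $q\leq p$. (i) $\Tdq$ is \emph{pruned} and of height $\kappa$: since $q\Vdash\d\in\kk$, any condition forcing $\sigma\subseteq\d$ can be refined to decide $\d(\len\sigma)$, so every node extends properly; as $\Tdq$ is moreover limit-closed (by hypothesis) and $\kappa$ is regular, a recursion taking unions at limit stages produces a branch of length $\kappa$ through any prescribed node, so in particular $[\Tdq]\neq\varnothing$. (ii) $[\Tdq]$ is a \emph{strongly dominating family}: given $f\colon\klk\to\kappa$ in $V$, the fact that $q\leq p$ forces $\d$ to strongly dominate $f$ lets me choose $q'\leq q$ deciding the threshold, say $q'\Vdash\forall\alpha>\alpha_0\,(\d(\alpha)\geq f(\d\till\alpha))$. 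Then every $\sigma\in\T_{\d,q'}$ satisfies $\sigma(\alpha)\geq f(\sigma\till\alpha)$ for all $\alpha\in(\alpha_0,\len\sigma)$, since this is a ground-model fact forced by any witness $r\leq q'$ for $\sigma\in\T_{\d,q'}$; hence every branch of $\T_{\d,q'}\subseteq\Tdq$ strongly dominates $f$, and such a branch exists by (i) applied to $q'$.

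The main density argument then runs as follows. Fix $q\leq p$ and an open dense set $D$ in $\kappa$-Cohen forcing; I must find $r\leq q$ and $t\in D$ with $r\Vdash t\subseteq\pi(\d)$. Setting $T:=\Tdq$, fact (ii) lets me apply Lemma~\ref{pseudogame} to obtain $s\in T$, which I take of limit length (as furnished by the proof of Lemma~\ref{pseudogame}, so that $\tilde\pi(s)$ sits on a block boundary), together with a short $\kappa$-Laver tree $L\subseteq T\till^\omega s$. Pick $t\in D$ with $\tilde\pi(s)\subseteq t$ and let $u\in\klk$ be the tail, $\tilde\pi(s)\cc u=t$. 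By Corollary~\ref{ccor}, applied to $L$ and the stationary set $S_u\subseteq\Cof_\omega(\kappa)$, there is $\eta\in[L]_\omega\subseteq[T\till^\omega s]_\omega$ with $\sup_n\eta(n)\in S_u$. Since $T$ is limit-closed, $s\cc\eta\in T=\Tdq$, and because the $S_t$ are pairwise disjoint the new block contributes exactly $u$, giving $\tilde\pi(s\cc\eta)=\tilde\pi(s)\cc u=t$. Unraveling the definition of $\Tdq$, there is $r\leq q$ with $r\Vdash(s\cc\eta)\subseteq\d$, and since $\pi$ is determined blockwise this yields $r\Vdash t=\tilde\pi(s\cc\eta)\subseteq\pi(\d)$. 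As $q$ and $D$ were arbitrary, the set of conditions forcing $\pi(\d)$ to meet a given ground-model dense set is dense below $p$, so $p\Vdash$ ``$\pi(\d)$ is a Cohen $\kappa$-real''.

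I expect the main obstacle to lie in the two structural facts about the interpretation tree, and specifically in the dual role of the limit-closure hypothesis: it is needed both to guarantee that the pruned tree $\Tdq$ actually has branches of length $\kappa$ (fact (i)) and to guarantee that the node $s\cc\eta$ manufactured by Corollary~\ref{ccor} re-enters $\Tdq$. This is exactly why the hypothesis is imposed on $\Tdq$ for \emph{all} $q\leq p$ and not merely for $p$: membership of $s\cc\eta$ in $\Tdq$ is what delivers a condition $r$ below the given $q$, as the density below $q$ demands. A smaller technical point to check is the block alignment, namely that $s$ may be taken of length a multiple of $\omega$ so that $\tilde\pi$ is a genuine approximation of $\pi$ at $s$.
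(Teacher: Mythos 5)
Your proposal is correct and follows essentially the same route as the paper's proof: the same map $\pi$ from Theorem \ref{Mainthm2}, the same key claim that $[\Tdq]$ is a strongly dominating family (proved by deciding the domination threshold and reading the inequality off every node of the interpretation tree, with limit-closure and pruned-ness yielding branches of length $\kappa$), then Lemma \ref{pseudogame} plus Corollary \ref{ccor} to manufacture $s \cc \eta \in \Tdq$ with the right block value, and a witnessing condition $r \leq q$ forcing $t \subseteq \pi(\d)$. Your minor variations are harmless or slight improvements: you skip the paper's (dispensable) step of deciding the initial segment $\d \till \beta_0$, and you explicitly justify the block-alignment point that $s$ may be taken of limit length, which the paper handles only implicitly via the proof of Lemma \ref{pseudogame}.
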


\begin{proof} Let $\pi$ be the function defined in Theorem \ref{Mainthm2}. We will show that $p \Vdash $``$\pi(\dot{d})$ is $\kappa$-Cohen''.  Let $D$ be $\kappa$-Cohen dense and $q \leq p$ arbitrary.  

\p \textbf{Claim:} $[\Tdq]$ is a strongly dominating set.

\begin{proof} Let $f: \klk \to \kappa$. Since $q$ forces that $\dot{d}$ is strongly dominating, in particular $q \Vdash \exists \beta \: \forall \alpha > \beta \: ( \dot{d}(\alpha) \geq \check{f}(\dot{d}\till \alpha))$. By ${<}\kappa$-distributivity, there is a $\beta_0$ and $q_0 \leq q$ which decides $\dot{d} \till \beta_0 =: \sigma_0$ and forces the following:

%, and such that  for every  $q' \leq q_0$  we  have  
%$$q_0 \Vdash 

$$\forall \alpha > \beta_0 \: (\dot{d}(\alpha) \geq \check{f}(\dot{d}\till \alpha)). \;\;\;\;\;\;\;\;\;\;\;\; (*)$$
Consider the interpretation tree $\T_{\d, q_0}$. Let $x$ be any branch in $[\T_{\d, q_0}] \subseteq [\Tdq]$. To see that such a branch exists, notice that for any $\sigma \in \T_{\d, q_0}$ there is a condition $q'$ deciding $\sigma \subseteq \dot{d}$, and by ${<}\kappa$-distributivity, we can find a stronger condition $q'' \leq q'$ deciding $\tau \subseteq \dot{d}$ for a proper extension $\tau$ of $\sigma$. Moreover, at limit nodes we can continue  since $ \T_{\d, q_0}$ is limit-closed by assumption. 

Now we see that  for any initial segment $\sigma \subseteq x$ which is longer than $\sigma_0$, we know that some $q' \leq q_0$ forces $\sigma \subseteq \d$. Since $q'$ also forces $(*)$, we must have $\sigma(\alpha) \geq f(\sigma \till \alpha)$ for all $\alpha$ in the domain of $\sigma$ with $\alpha > \beta_0$. Thus we conclude that   $x(\alpha) \geq f(x\till \alpha)$ holds for every $\alpha > \beta_0$. \qedhere (Claim)

%\p But now, for any $\alpha > \beta$, let  $\sigma \subseteq x$ be such that $\len(\sigma) >\alpha$. Then some $q'$ forces $ \sigma \subseteq \dot{d}$, so by $(*)$ we know that $\sigma(\alpha) > f(\sigma \till \alpha)$. But then also  $x(\alpha) > f(x\till \alpha)$. \qedhere (Claim)
\end{proof}

\p From the Claim and  Lemma \ref{pseudogame}, it follows that there is $\sigma \in \Tdq$ such that $\Tdq \till^\omega \sigma$ contains a short $\kappa$-Laver tree. Just as in the proof of Theorem \ref{Mainthm2},  let $t \in D$ be such that $\tilde{\pi}(\sigma) \subseteq t$,   $u$   such that $\tilde{\pi}(\sigma) \cc u = t$, and find  $\eta \in [\Tdq \till^\omega \sigma]_\omega$ such that $\sup_n \eta(n) \in S_u$. Now, notice that by the assumption that $\Tdq$ is limit-closed, $\sigma \cc \eta \in \Tdq$, hence there is $r \leq q$ forcing $\sigma \cc \eta \subseteq \dot{d}$. But then  $$r \:  \Vdash  \: t= \tilde{\pi}(\sigma) \cc u   = \tilde{\pi}(\sigma \cc \eta) \subseteq \pi(\d),$$ and so $r \: \Vdash \: \pi(\d) \in [t]$. %as had to be shown. %, and therefore also $t \subseteq \pi(\d)$.  
 \end{proof}

%goto 

%So we see that, within the class of ${<}\kappa$-closed forcings, the decisive issue concerning Question \ref{qqq} is whether interpretation trees are limit-closed. Notice that without this assumption, we would not be able to conclude $\sigma \cc \eta \in \Tdq$ in the proof of Lemma \ref{lemma2} (actually, closure under $\omega$-limits is sufficient). 

\bigskip Next we look at  forcings $\IP$ whose conditions are limit-closed trees on $\klk$. %We will need to proveWe will prove that if such a forcing adds a dominating $\kappa$-real which is the image of the generic under a continuous function in the ground model, then $\IP$ adds a Cohen $\kappa$-real.

\begin{Def} A forcing partial order $\IP$ is called a \emph{tree forcing} if its conditions are limit-closed trees $T \subseteq \klk$, and for every $T \in \IP$ and $\sigma \in T$, the restriction $T \thru \sigma \in \IP$.  \end{Def}

We  need to review continuous functions on $\kk$.  Let us call a function $h: \klk \to \klk$ \emph{pre-continuous} if: \begin{enumerate}
\item $\sigma \subseteq \tau \; \Rightarrow \; h(\sigma) \subseteq h(\tau)$, and  
%\item $h\left(\bigcup_{i<\lambda} \sigma_i \right) = \bigcup_{i<\lambda} h(\sigma_i)$, for any $\lambda < \kappa$ and   increasing sequence $\{\sigma_i \: : \: i < \lambda\}$. 
\item $\forall x \in \kk, \;   \{ \len(h(\sigma)) \: : \: \sigma \subseteq x \}$ is cofinal in $\kappa$. \end{enumerate}

\p If $h$ is pre-continuous, let $f = \lim(h)$ be the function defined as  $f(x) := \bigcup \{h(\sigma) : \sigma \subseteq x\}$. Just as in the classical situation, it is easy to check that if   $h$ is pre-continuous, then $\lim(h)$  is continuous, and for every continuous $f$   there exists a pre-continuous $h$ such that $f = \lim(h)$. 

Unlike the classical situation, ``being pre-continuous'' is not necessarily an absolute notion. The statement (2) above is a generalized-$\PI^1_1$-statement, so it will be absolute between ${<}\kappa$-closed forcing extensions, but not necessarily between arbitrary ${<}\kappa$-distributive forcing extensions. However  in our case, this will not present a problem. We will always talk about pre-continuous functions in the ground model, and implicitly assume that the continuous function in the extension is well-defined at least on the generic $\kappa$-real. %, in the extension, thinthis gives a well-defined continuous function on th since in our case we will always assu. In particular, it could happen that $h$ is pre-continuous in the ground model but condition 2 fails in a forcing extension

\newcommand{\tr}{{\rm tr}}
\newcommand{\pf}{''}

\bigskip The main point is that for tree forcings, the interpretation trees are directly related to the forcing conditions.  For a tree $T$ and a pre-continuous function $h$, we will  consider the tree generated by the image of $T$ under $h$:%Note that e will need to consider the image $h$``$T$ of a limit-closed tree $T$ under a pre-continuous function $h$.  In general, such objects are not necessarily trees, so we will look at the tree generated by $h$``$T$:
$$\tr(h\pf T) := \{\tau \: : \: \exists \sigma \in T \: ( \tau \subseteq h(\sigma))\}.$$

\begin{Lem} \label{lemma3} Let $\IP$ be a ${<}\kappa$-distributive tree forcing, $\dot{x}$ a name for a $\kappa$-real,  $h$ a pre-continuous function in the ground model with $f = \lim(h)$,  and suppose that $T \in \IP$ is such that $T \Vdash \dot{x} = f(\xxg)$.\footnote{In particular, part of this assumption is that $T$ forces that $\{ \len(h(\sigma)) :  \sigma \subseteq \xxg \}$ is cofinal in $\kappa$. Recall that even if $h$ is pre-continuous in the ground model, the second condition may fail to be absolute. If $\IP$ is ${<}\kappa$-closed, then the condition is preserved by  $\PI^1_1$-absoluteness.}   Then $\T_{\dot{x},T} = \tr(h\pf T)$. \end{Lem}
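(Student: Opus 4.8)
The plan is to prove the two inclusions $\tr(h''T) \subseteq \T_{\dot{x},T}$ and $\T_{\dot{x},T} \subseteq \tr(h''T)$ separately, using the hypothesis $T \Vdash \dot{x} = f(\xxg)$ together with the fact that $\IP$ is a tree forcing, so that every $\sigma \in T$ is actually ``hit'' by some condition (namely $T \thru \sigma$) which forces $\sigma \subseteq \xxg$.

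\begin{proof}
We show both inclusions.

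\p \textbf{($\tr(h''T) \subseteq \T_{\dot{x},T}$):} Let $\tau \in \tr(h''T)$, so there is $\sigma \in T$ with $\tau \subseteq h(\sigma)$. Since $\IP$ is a tree forcing and $\sigma \in T$, the restriction $T \thru \sigma$ is a condition in $\IP$, and clearly $T \thru \sigma \Vdash \sigma \subseteq \xxg$. Because $h$ is pre-continuous we have $h(\sigma) \subseteq f(\xxg)$ whenever $\sigma \subseteq \xxg$, so $T \thru \sigma \Vdash h(\sigma) \subseteq f(\xxg) = \dot{x}$, using $T \Vdash \dot{x} = f(\xxg)$ and $T\thru\sigma \le T$. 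In particular $T \thru \sigma \Vdash \tau \subseteq \dot{x}$, witnessing $\tau \in \T_{\dot{x},T}$.

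\p \textbf{($\T_{\dot{x},T} \subseteq \tr(h''T)$):} Let $\tau \in \T_{\dot{x},T}$, so there is $q \leq T$ with $q \Vdash \tau \subseteq \dot{x}$. Since $q \leq T$ and $T \Vdash \dot{x} = f(\xxg)$, we get $q \Vdash \tau \subseteq f(\xxg)$. By pre-continuity, condition (2), $q$ forces that $\{\len(h(\sigma)) : \sigma \subseteq \xxg\}$ is cofinal in $\kappa$ (this is part of the standing assumption recorded in the footnote), so $q$ forces that there is $\sigma \subseteq \xxg$ with $\len(h(\sigma)) \geq \len(\tau)$. Fix such a condition $r \leq q$ deciding such a $\sigma$, i.e.\ $r \Vdash \check{\sigma} \subseteq \xxg$ and $\len(h(\sigma)) \geq \len(\tau)$. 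Since $\IP$ is a tree forcing, the existence of $r \leq T$ forcing $\sigma \subseteq \xxg$ means precisely that $\sigma \in T$. Now $r \Vdash h(\sigma) \subseteq f(\xxg) = \dot{x}$ and $r \Vdash \tau \subseteq \dot{x}$; since $h(\sigma)$ and $\tau$ are both (forced to be) initial segments of $\dot{x}$ and $\len(\tau) \leq \len(h(\sigma))$, we conclude $\tau \subseteq h(\sigma)$. As $\sigma \in T$, this gives $\tau \in \tr(h''T)$, completing the proof.
\end{proof}

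The main obstacle is the second inclusion, specifically extracting from a condition $q$ forcing $\tau \subseteq \dot{x}$ an actual node $\sigma \in T$ with $\tau \subseteq h(\sigma)$: one must pass to a stronger condition $r$ deciding a long enough $\sigma \subseteq \xxg$, and then argue that ``some condition below $T$ forces $\sigma \subseteq \xxg$'' is equivalent to ``$\sigma \in T$,'' which is exactly the feature distinguishing tree forcings and is where the footnoted cofinality assumption on $\{\len(h(\sigma)):\sigma\subseteq\xxg\}$ is genuinely used to guarantee that $\len(h(\sigma))$ can be taken at least $\len(\tau)$. The comparison of two initial segments of the same real is routine, but care is needed that these manipulations take place below a single condition so the comparison is legitimate.
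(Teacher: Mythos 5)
Your proposal follows the same overall route as the paper's proof: the easy inclusion via the restriction $T \thru \sigma \Vdash \sigma \subseteq \xxg$, and for the converse, passing below a condition forcing $\tau \subseteq \dot{x}$ to a stronger condition that pins down an initial segment $\sigma$ of $\xxg$ with $h(\sigma)$ long enough, then reading off $\tau \subseteq h(\sigma)$ by absoluteness. The cosmetic difference (you extract $\sigma$ with $\len(h(\sigma)) \geq \len(\tau)$ from the footnoted cofinality assumption and then compare the two initial segments of $\dot{x}$, whereas the paper extracts $\sigma$ directly from $S \Vdash \exists \sigma \subseteq \xxg \, (\tau \subseteq h(\sigma))$) is harmless, and you correctly place all the manipulations below a single condition $r$.

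However, there is one step you assert as if it were definitional when it is in fact the crux of the direction, and it is precisely the step the paper proves: you write that ``the existence of $r \leq T$ forcing $\check{\sigma} \subseteq \xxg$ means precisely that $\sigma \in T$.'' The definition of a tree forcing only says that conditions are limit-closed trees closed under restrictions $T \thru \sigma$; the backward implication you use requires an argument. The paper supplies it: if $S' \Vdash \check{\sigma} \subseteq \xxg$, then $\sigma \subseteq \stem(S')$, since otherwise some node $\sigma' \in S'$ incompatible with $\sigma$ would give $S' \thru \sigma' \Vdash \sigma' \subseteq \xxg$, contradicting $S' \Vdash \check{\sigma} \subseteq \xxg$; hence $\sigma \in S' \subseteq T$ (the ordering being inclusion). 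Alternatively one can argue that $r$ forces $\xxg$ to be a branch through $r$, so $r \Vdash \check{\sigma} \in \check{r}$, whence $\sigma \in r \subseteq T$ by absoluteness. Relatedly, your phrase ``fix such a condition $r \leq q$ deciding such a $\sigma$'' silently assumes the witness $\sigma$ can be taken in the ground model; the paper justifies this explicitly by ${<}\kappa$-distributivity (no new elements of $\klk$ are added, so the name for $\sigma$ can be decided), and you should say something to this effect. Both repairs are short, so this is a fixable omission at the key step rather than a wrong approach; with them supplied, your proof is correct and essentially the paper's.
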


\begin{proof} First suppose $\sigma \in T$. Then $T \thru \sigma \Vdash \sigma \subseteq \xxg$, therefore $T \thru \sigma \Vdash h(\sigma) \subseteq f(\xxg) = \dot{x}$. Therefore $h(\sigma) \in \T_{\dot{x}, T}$.

\p Conversely, let $\tau \in \T_{\dot{x}, T}$ be given. We want to find $\sigma \in T$ such that $\tau \subseteq h(\sigma)$. By definition there is $S \leq T$ such that $S \Vdash \tau \subseteq \dot{x}$. But since $S \Vdash \dot{x} = f(\xxg)$, we also have $$S \Vdash \exists \sigma \subseteq \xxg \: (\tau \subseteq h(\sigma)).$$

\p By ${<}\kappa$-distributivity, there exists $S' \leq S$ which decides $\sigma$, i.e., we may assume that $\sigma$ is in the ground model, $\tau \subseteq h(\sigma)$ holds, and $S' \Vdash \sigma \subseteq \xxg$. Moreover, $\sigma  \subseteq \stem(S')$, because otherwise there would be some incompatible $\sigma' \in S'$, and we would have $S' \thru \sigma' \Vdash \sigma' \subseteq \xxg$, contradicting $S' \Vdash \sigma \subseteq \xxg$. We conclude that $\sigma  \in S' \subseteq S \subseteq T$ and $\tau \subseteq h(\sigma)$ as desired.  \end{proof}

Taking $h$ to be the identity, an immediate corollary is that if $\IP$ is a ${<}\kappa$-distributive tree forcing, then the interpretation trees for the generic $\xxg$ are limit-closed. If, in addition,  the generic is strongly dominating, then by Lemma \ref{lemma1} we immediately know that $\IP$ adds Cohen $\kappa$-reals. 

\bigskip For our stronger result, we want to consider  pre-continuous functions $h$ other than the identity. In those cases, it is not guaranteed that $\tr(h \pf T)$ is limit-closed, even if $T$ was. To avoid this problem we prove two technical lemmas. The main idea is that, even if the original continuous function does not preserve limit-closure, we may change it to another one which does. %This is done in two steps: first by looking and which still. %  In the following two technical lemma's, we show how this problem can be avoided. %, first by dHowever, what we will show is that every pre-continuous function can be \emph{dominated} by a pre-continuous function which, in addition, preserves limit closure of the tree. 

\begin{Def} A  pre-continuous  function $h$ is called  \emph{limit-closure-preserving} if for every limit-closed tree $T$, the tree $\tr(h \pf T)$ is also limit-closed.  %A continuous  $f$ is  \emph{limit-closure-preserving} if $f = \lim(h)$ for a limit-closure-preserving $h$. 
\end{Def}

\begin{Lem} \label{higher} For every pre-continuous function $h$, there exists a pre-continuous and limit-closure-preserving function  $j$, such that for all $\sigma$ and all $\alpha$ (in the respective domains), we have:
$$h(\sigma)(\alpha) \leq j(\sigma)(\alpha).$$ \end{Lem}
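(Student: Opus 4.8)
The plan is to build $j$ by pairing each value $h(\sigma)(\alpha)$ with a \emph{code} for the initial segment of $\sigma$ responsible for that value, so that $j$ pointwise dominates $h$ while $\sigma$ (or enough of it) can be read back off $j(\sigma)$. This ``legibility'' is exactly what will force $\tr(j\pf T)$ to be limit-closed. Using $\klk=\kappa$, fix an injection $\mathrm{c}\colon\klk\to\kappa$ and the G\"odel pairing $\langle\cdot,\cdot\rangle\colon\kappa\times\kappa\to\kappa$, which is a bijection satisfying $\langle a,b\rangle\ge a$. For $\sigma\in\klk$ and $\alpha<\len(h(\sigma))$ let $\beta_\sigma(\alpha)$ be the least $\beta$ with $\len(h(\sigma\till\beta))>\alpha$ (this exists and satisfies $\beta_\sigma(\alpha)\le\len(\sigma)$), and set $\len(j(\sigma)):=\len(h(\sigma))$ together with
$$ j(\sigma)(\alpha) := \langle\, h(\sigma)(\alpha),\; \mathrm{c}(\sigma\till\beta_\sigma(\alpha))\,\rangle. $$

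First I would check that $j$ is pre-continuous and dominates $h$. Domination is immediate from $\langle a,b\rangle\ge a$, and pre-continuity condition (2) is inherited from $h$ since $\len(j(\sigma))=\len(h(\sigma))$. For monotonicity one uses that $\beta_\sigma(\alpha)$ is stable under end-extension: if $\sigma\subseteq\sigma'$ and $\alpha<\len(h(\sigma))$, then because $h$ is monotone the least $\beta$ witnessing $\len(h(\cdot\till\beta))>\alpha$ is computed identically from $\sigma$ or $\sigma'$, and the relevant initial segment $\sigma\till\beta_\sigma(\alpha)=\sigma'\till\beta_{\sigma'}(\alpha)$ agrees; hence $j(\sigma)\subseteq j(\sigma')$.

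The heart of the proof is limit-closure-preservation. Fix a limit-closed $T$, a limit $\gamma<\kappa$, and $\tau\in\kappa^\gamma$ with $\tau\till\alpha\in\tr(j\pf T)$ for all $\alpha<\gamma$; I must produce $\rho\in T$ with $\tau\subseteq j(\rho)$. The key point is that $\tau$ alone already carries the needed data: decoding the second coordinate of $\tau(\alpha)$ yields a node $\rho_\alpha\in\klk$, and for \emph{any} single witness $\sigma_{\alpha'}\in T$ with $\tau\till(\alpha'{+}1)\subseteq j(\sigma_{\alpha'})$ one reads off $\rho_\alpha=\sigma_{\alpha'}\till\beta_{\sigma_{\alpha'}}(\alpha)$ for every $\alpha\le\alpha'$. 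Since $\alpha\mapsto\beta_{\sigma_{\alpha'}}(\alpha)$ is non-decreasing, this shows $\langle\rho_\alpha:\alpha<\gamma\rangle$ is $\subseteq$-increasing and each $\rho_\alpha\in T$. As $\kappa$ is regular and $\gamma<\kappa$, the union $\rho:=\bigcup_{\alpha<\gamma}\rho_\alpha$ has length $<\kappa$, so limit-closure of $T$ gives $\rho\in T$. Finally one verifies $\tau\subseteq j(\rho)$: for each $\alpha<\gamma$ one has $\rho_\alpha\subseteq\rho$, so $\len(h(\rho))\ge\len(h(\rho_\alpha))>\alpha$ and $\beta_\rho(\alpha)=\len(\rho_\alpha)$ by minimality, whence both coordinates of $j(\rho)(\alpha)$ match those of $\tau(\alpha)$.

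The main obstacle is precisely this last step of recovering a single node witnessing the limit $\tau$: the witnesses for different initial segments of $\tau$ need not cohere, and without extra structure the tree of partial witnesses could be Aronszajn-like and carry no cofinal branch, so that $\tr(h\pf T)$ genuinely fails limit-closure. The coding trick circumvents this by making the witnesses' relevant initial segments legible from $\tau$ itself, forcing them to line up into one increasing chain to which limit-closure of $T$ applies. Two small points deserve care: that $\mathrm{c}$ and the pairing are genuinely injective, so decoding is unambiguous and witness-independent; and that no cofinality property of the length function $\sigma\mapsto\len(h(\sigma))$ is actually needed, since the verification only invokes $\beta_\rho(\alpha)=\len(\rho_\alpha)$, which is automatic from minimality.
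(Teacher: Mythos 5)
Your proof is correct, but it implements the shared underlying idea---using $\klk=\kappa$ to stamp identifying information about the input node into the output, so that a limit in $\tr(j\pf T)$ determines a limit in $T$---in a genuinely different way from the paper. The paper defines $j$ by recursion on $\sigma$: at each step it appends the block $w$ by which $h$ grows, shifted by a function $R(\rho,w)$ whose only injectivity requirement is that distinct nodes $\rho$ force distinct \emph{first} coordinates of the newly appended block; limit-closure-preservation is then proved by contradiction, choosing for each element $u_i$ of the increasing chain a $\subseteq$-minimal witness $s_i\in T$, and showing that incompatible witnesses would stamp different first coordinates at the same position inside $u_i\subseteq u_{i'}$ (analyzing the meet $r$ and the least $r_0, r_1$ past it where $j$ grows, with a successor/limit case split). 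You instead give a closed-form, non-recursive definition that codes into \emph{every} coordinate the fully $c$-coded minimal initial segment $\sigma\till\beta_\sigma(\alpha)$ responsible for producing that coordinate, and the limit case is handled constructively rather than by contradiction: the chain $\langle \rho_\alpha : \alpha<\gamma\rangle$ is decoded directly from $\tau$ itself, with witness-independence guaranteed by injectivity of the pairing and of $c$, and with $\beta_\rho(\alpha)=\len(\rho_\alpha)$ (via minimality of $\beta_\sigma(\alpha)$, since $\rho\till\beta=\sigma\till\beta$ for $\beta\le\len(\rho_\alpha)$) doing all the remaining work. Your version buys a cleaner verification---no minimal-witness analysis, no recursion and hence no successor/limit case distinction---at the price of encoding more information per coordinate; the paper's $R$ is more parsimonious, tagging only new blocks and only in their first coordinate. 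I checked the details and they hold up: domination follows from $\langle a,b\rangle\ge\max(a,b)$ for the G\"odel pairing; monotonicity follows since $\beta_{\sigma'}(\alpha)=\beta_\sigma(\alpha)\le\len(\sigma)$ for $\sigma\subseteq\sigma'$; $\len(h(\rho))\ge\gamma$ follows from $\len(h(\rho_\alpha))>\alpha$ without any cofinality hypothesis, as you note; and in the final step the first coordinates match because $\rho_\alpha\subseteq\rho$ and $\rho_\alpha\subseteq\sigma$ force $h(\rho)(\alpha)=h(\rho_\alpha)(\alpha)=h(\sigma)(\alpha)$---a point you gloss slightly (``whence both coordinates match'') but which is immediate from monotonicity of $h$. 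Note also that your $j$, like the paper's, satisfies $\len(j(\sigma))=\len(h(\sigma))$, so ``the respective domains'' in the statement coincide exactly.
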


\smallskip

\begin{proof} Fix a function $R: \klk \times \klk \to \klk$ such that: \begin{enumerate}
%\item $R \till \left( \klk \times (\klk \setminus \{ \varnothing\} ) \right) $ is injective,
\item $R(\rho,\varnothing) = \varnothing $ for all $\rho$.
\item If $\sigma \neq \varnothing$, then \begin{itemize}
\item $\len(R(\rho,\sigma)) = \len(\sigma)$ for all $\rho$,  

\item $\sigma(\alpha) \leq R(\rho,\sigma)(\alpha)$ for all $\rho$ and all $\alpha < \len(\sigma)$. \end{itemize}

\item If $\rho \neq \rho'$, then for any $\sigma, \sigma' \neq \varnothing $, we have $R(\rho,\sigma)(0) \neq R(\rho',\sigma')(0) $.  \end{enumerate}

\p In words: $R$ takes every non-empty sequence $\sigma$ and shifts it coordinate-wise  to a higher sequence of the same length depending on $\rho$; this happens in such a way that for different $\rho \neq \rho'$, the first coordinates of $R(\rho, \dots)$ and $R(\rho', \dots)$ are never the same.   It is easy to see that such a function exists since $\klk = \kappa$.

\p Let $h$ be a pre-continuous function. Define $j$ inductively: \begin{itemize}

\item If $j(\sigma)$ is defined, then for  every $\beta$ define $j(\sigma \cc \left<\beta\right>)$ as follows: let $w$ be such that $h(\sigma) \cc w = h(\sigma \cc \left< \beta \right> )$ ($w = \varnothing$ is also allowed). Then let $$j(\sigma \cc \left<\beta\right>) := j(\sigma) \cc R(\sigma \cc \left<\beta\right>, w).$$

\item For $\sigma$ of limit length (including $\sigma=\varnothing)$, let $w$ be such that $h(\sigma) = \bigcup_{\sigma' \subset \sigma} h(\sigma') \cc w$. Note that this is always possible because $h(\sigma') \subseteq h(\sigma)$ for all $\sigma'\subset \sigma$ ($w=\varnothing$ is allowed). Then let
$$ j(\sigma) := \left( \: \bigcup_{\sigma' \subset \sigma} j(\sigma') \right) \cc R(\sigma, w).$$

\end{itemize}

%\p  Let $g := \lim(j)$. 

\p We claim that $j$ is as required.

\p Notice that, inductively,  $\len(j(\sigma)) = \len(h(\sigma))$ for every $\sigma$.  It is also clear, by construction, that $\sigma \subseteq \sigma'$ implies $j(\sigma) \subseteq j(\sigma')$. Therefore $j$ is pre-continuous.  Moreover, by construction we immediately see that  $h(\sigma)(\alpha) \leq j(\sigma)(\alpha)$ holds for every $\sigma$ and $\alpha  < \len(\sigma)$. It remains to prove that $j$ is limit-closure-preserving.% Let $\subset$ denote ``proper initial segments'' for sequences.

\p Let $T$ be an arbitrary limit-closed tree, and let $U := \tr(j \pf T)$. Let $\{u_i : i<\lambda\}$ be an increasing sequence in $U$ of length $\lambda<\kappa$. We need to show that this sequence has an extension in $U$.  For each $i$, let $s_i \in T$ be \emph{$\subseteq$-minimal} such that $u_i \subseteq j(s_i)$.\footnote{The $s_i$'s do not need to be distinct; e.g., they could be all equal to a unique $s$, or there could be $\cf(\lambda)$-many distinct $s_i$'s, etc.}

\p \textbf{Claim.} $s_{i} \subseteq s_{i'}$ for all $i<i'<\lambda$.

% Let $T$ be a limit-closed tree, and we need to show that $j``T$ is also a limit-closed tree. , which is sufficient. Let $\{\tau_i : i<\lambda\}$ be a strictly increasing sequence in $h$``$T$, for %$\lambda < \kappa$. Then there is a sequence $\{ \sigma_i : i < \lambda\}$ in $T$ such that $h(\sigma_i) = \tau_i$. By assumption, this sequence is also strictly increasing. Since $T$ is limit-closed there exists $\sigma_\lambda$ extending all $\sigma_i$. Then $h(\sigma_\lambda)$ extends all $\tau_i$. 

\begin{proof}  Suppose, towards contradiction, that $s_i \not\subseteq s_{i'}$.  First, $s_{i'} \subset s_i$ (proper extension) is clearly not possible, since this would imply $u_i \subseteq u_{i'} \subseteq j(s_{i'}) \subseteq j(s_i)$, and thus we would have picked $s_{i'}$ instead of $s_i$. Therefore, $s_i$ and $s_{i'}$ are incompatible. Let $r$ be maximal such that $r \subseteq s_i$ and $r \subseteq s_{i'}$.

\p Next, notice that $j(r) \subset u_i$: otherwise, we would have $u_i \subseteq j(r)$, so we would have  picked $r$ instead of $s_i$.

\p So we also know that $j(r) \subset j(s_i)$ and $j(r) \subset j(s_{i'})$. Let $r_0$ be minimal such that 
$$r \subseteq r_0   \subseteq s_i \;\;\;\; \text{and} \;\;\;\; j(r)  \subset j(r_0)  $$
and let $r_1$ be minimal such that
$$r \subseteq r_1   \subseteq s_{i'} \;\;\;\; \text{and} \;\;\;\; j(r) \subset j(r_1).  $$ Note that both $r_0$ and $r_1$ are proper extensions of $r$, see Figure \ref{diagram}. First we consider $r_0$: there are two cases. \begin{itemize}
\item Suppose $r_0$ is of successor length. Then there is $r_{00}$ such that $r_0 = r_{00} \cc \left<\beta\right>$ and $j(r) = j(r_{00})$. Also, (since $j(\sigma)$ and $h(\sigma)$ always have the same length),  there exists   $w\neq \varnothing$  such that $h(r_{00} \cc \left<\beta\right>) = h(r_{00}) \cc w$. Then by definition we have: % Then by definition we have
$$j(r_0) \;\;  =\;\;  j(r_{00}) \cc R(r_0, w)  \; \; =\;\;   j(r) \cc R(r_0, w). $$

\item Now suppose $r_0$ is of limit length. Then $j(r) = j(r')$ for all $r'$ with $r \subseteq r' \subset r_0$, but $h(r_0) \supset \bigcup_{r' \subset r_0} h(r')$. So (again because $j(\sigma)$ and $h(\sigma)$ have the same length) there exists $w \neq \varnothing$ such that $h(r_0) =  \bigcup_{r' \subset r_0} h(r') \cc w$. By definition, we have

$$j(r_0) \;\;  =\;\;   \left( \bigcup_{r' \subset r_0} j(r')  \right) \cc  R(r_0, w)  \; \; =\;\;   j(r) \cc R(r_0, w). $$
\end{itemize}

\p Thus, in both cases we have $j(r_0)   =    j(r) \cc R(r_0, w)$ for some non-empty $w$. 

\p By exactly the same argument but looking at $r_1$, we  see that $j(r_1) = j(r) \cc R(r_1, v)$ for some non-empty $v$.

\p But $r_0 \neq r_1$,  so by   condition 3 of the definition of $R$, the first coordinates of $R(r_0, w)$ and of $R(r_1, v)$ are not the same. However, we also know  $ j(r) \cc R(r_0, w) \subseteq j(s_i)$ while $  j(r) \cc R(r_1, v) \subseteq j(s_{i'})$. Together with the fact that   $j(r) \subset u_i \subseteq j(s_i)$ and $j(r) \subset u_i \subseteq u_{i'} \subseteq j(s_{i'})$, this gives us the desired contradiction (see Figure \ref{diagram}). We conclude that the only option is $s_i \subseteq s_{i'}$. \qedhere (Claim)

\begin{figure}[h]
\centering
\includegraphics[width=13cm]{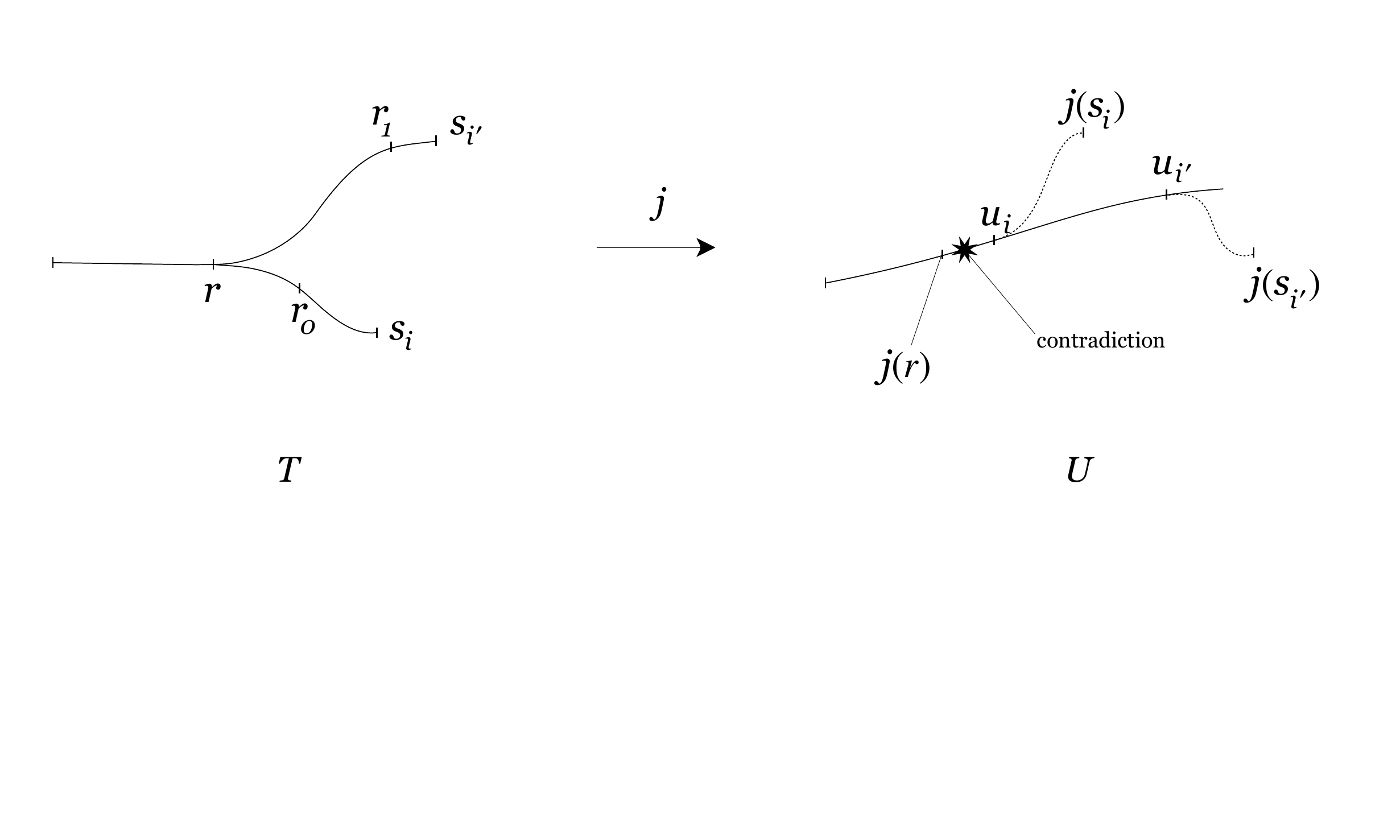}
 \vskip-3cm
\caption{Contradiction assuming  $s_i \;\bot\; s_{i'}$}
\label{diagram}
\end{figure}
\end{proof}

\p So we have an increasing sequence $\{s_i \: : \: i<\lambda\}$ in $T$, and since $T$ is limit-closed, there is $s_\lambda \in T$ with $s_i \subseteq s_\lambda$ for all $i$. Then $u_i \subseteq j(s_i) \subseteq j(s_\lambda)$ holds for all $i$. This completes the proof that $U$ is limit-closed. \end{proof}

The point of this lemma is that if $h$ is pre-continuous  in the ground model with $f = \lim(h)$ and  $T$ forces that $f(\xxg)$ is a dominating $\kappa$-real, then letting $j$ be as in the lemma with $g=\lim(j)$, we know that $T$ also forces that  $g(\xxg)$ is a dominating $\kappa$-real. %\footnote{Note also that, since $j$ and $h$ preserve length, and $\{ \len(h(\sigma))  :  \sigma \subseteq \xxg \}$ is cofinal in $\kappa$, the same holds for $\{ \len(j(\sigma)) :  \sigma \subseteq x \}$ is cofinal in $\kappa$. In particular, $g(\xxg)$ is well-defined.} 

The next step is to convert the dominating into a strongly dominating real.  In Lemma \ref{strongly} we mentioned how to convert a dominating to a strongly dominating real, and it is easy to see that this conversion can be coded by a continuous function in the ground model. The problem is, this function may again fail to be limit-closure-preserving, so we need to use a similar method as above to construct such a conversion function which is, in addition, limit-preserving.

Let us fix an enumeration $\{\sigma_i : i < \kappa\}$ of $\klk$ such that $\sigma_i \subseteq \sigma_j \: \Rightarrow i < j$, using the  notation $\q{\sigma} = i$ iff $\sigma = \sigma_i$. Recall that in Lemma \ref{strongly}, the conversion was given by $e(\alpha) = d^*(e \till \alpha) = d(\q{e \till \alpha})$. However, we may relax the condition to $e(\alpha) \geq d(\q{e \till \alpha})$, and the conversion would still work. 

\begin{Def} A function $\gamma: \kk \to \kk$ is called \emph{strongly-converting}, if for all $x$  and all $\alpha$:
$$ \gamma(x)(\alpha) \: \geq \: x(\q{ \gamma(x) \till \alpha  }).$$
% \;\;\;\;\;\;\; (*)$$ 
\end{Def}

%Clearly,  if $\gamma$ is   strongly-converting function in the ground model, and $T \Vdash $ ``$\d$ is dominating'', then  $T \Vdash $ ``$i(\dot{d})$ is strongly dominating''.

\begin{Lem}\label{second} There exists a pre-continuous and limit-closure preserving function $k$ such that $\gamma = \lim(k)$ is  strongly-converting.    \end{Lem}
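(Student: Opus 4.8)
The plan is to mimic the two-layer strategy already used for Lemma \ref{higher}: first build a \emph{basic} strongly-converting pre-continuous function by a greedy reading, and then re-run the construction while inserting the shift function $R$ so as to force limit-closure-preservation. The subtlety, and the reason we cannot simply quote Lemma \ref{higher}, is that the strongly-converting condition is self-referential — the value $\gamma(x)(\alpha)$ is constrained by $x$ at the position coded by the output prefix $\gamma(x)\till\alpha$ — so shifting the output with $R$ also changes the positions we read from, and the two operations must be interleaved rather than performed one after the other. This is exactly where the relaxation from ``$=$'' to ``$\geq$'' in the definition of strongly-converting (pointed out just before the statement) does the work.

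First I would fix the enumeration $\q{\cdot}$ of $\klk$ and define a basic function $k_0$ by a greedy recursion: for $\sigma \in \klk$, build $\rho = k_0(\sigma)$ coordinate by coordinate, setting $\rho(\alpha) := \sigma(\q{\rho\till\alpha})$ whenever $\q{\rho\till\alpha} < \len(\sigma)$, taking unions at limits, and stopping at the first $\alpha$ with $\q{\rho\till\alpha} \geq \len(\sigma)$. One checks that $k_0$ is monotone, that for total $x$ the recursion never halts (so $\len(\lim(k_0)(x)) = \kappa$), and that the lengths $\len(k_0(\sigma))$ for $\sigma\subseteq x$ are cofinal in $\kappa$ (because, by regularity, for any fixed $\alpha$ only ${<}\kappa$-many codes $\q{\gamma_0(x)\till\beta}$ with $\beta\leq\alpha$ are relevant). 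Hence $k_0$ is pre-continuous and $\gamma_0 = \lim(k_0)$ satisfies $\gamma_0(x)(\alpha) = x(\q{\gamma_0(x)\till\alpha})$, i.e.\ it is strongly-converting. It need not, however, be limit-closure-preserving.

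Next I would define the desired $k$ by the same greedy recursion but inserting $R$ on each newly produced block, exactly as $j$ was produced from $h$ in Lemma \ref{higher}. Concretely, when passing from a node $\sigma$ to a successor (or limit) extension and the greedy reading extends the output by a nonempty block, I set the \emph{first} coordinate of that block to $R(\text{node}, \cdot)(0)$ and, in general, shift the whole block up by $R(\text{node}, \cdot)$; the crucial point is that, since $R$ only raises values (condition 2) and preserves lengths, each output coordinate still dominates the value read from $\sigma$, while the code $\q{\cdot}$ of each prefix is always computed from the actual (already shifted) output. The circularity — shifting a coordinate changes the code of the next prefix and hence what we read next — is harmless because it is resolved by a single well-founded recursion on output coordinates within each step. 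Pre-continuity of $k$ is verified exactly as for $k_0$ (monotonicity is immediate from the recursion, and cofinality of lengths again uses regularity of $\kappa$), and strong-converting holds because $\gamma(x)(\alpha) \geq x(\q{\gamma(x)\till\alpha})$ now follows from the shift-up property of $R$ together with the $\geq$-relaxation.

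Finally, limit-closure-preservation is obtained by re-running the Claim from the proof of Lemma \ref{higher} almost verbatim: for an increasing sequence $\{u_i : i<\lambda\}$ in $U := \tr(k\pf T)$, choosing $\subseteq$-minimal witnesses $s_i \in T$ with $u_i \subseteq k(s_i)$, condition 3 of $R$ (distinct nodes yield distinct first coordinates of their new blocks) forces the $s_i$ to be linearly ordered; limit-closure of $T$ then provides $s_\lambda \supseteq s_i$ for all $i$, whence $\bigcup_i u_i \subseteq k(s_\lambda) \in U$. I expect the main obstacle to be precisely the bookkeeping of this circularity: one must check that the greedy reading, the code computation, and the $R$-shift can be carried out simultaneously in one recursion without the block structure degenerating, and that the resulting blocks still enjoy the fresh-first-coordinate property needed for the Claim. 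The relaxation to ``$\geq$'' is what makes these three requirements simultaneously satisfiable.
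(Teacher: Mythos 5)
Your proposal is correct and follows essentially the same route as the paper: a self-referential greedy reading of the input at positions coded by the (already shifted) output, with an injective, value-dominating $R$ inserted coordinate-wise, strong conversion secured by the relaxation to ``$\geq$'', and limit-closure-preservation obtained by re-running the minimal-witness Claim from Lemma \ref{higher}. The paper merely streamlines your two-stage plan into a single recursion — appending the one shifted coordinate $R(\sigma \cc \left<\beta\right>, \beta)$ exactly when $\len(\sigma) = \q{k(\sigma)}$ and taking plain unions at limits — which resolves the circularity you flag in precisely the interleaved, one-coordinate-at-a-time manner you describe.
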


\begin{proof} Fix a function $R: \klk \times \kappa \to \kappa$ which is injective and $R(\rho,\alpha) \geq \alpha$ for all $\rho$ and all $\alpha$.  

\p Define $k: \klk \to \klk$ inductively as follows:

\begin{itemize} 
\item $k(\sigma \cc \left<\beta\right>) := \begin{cases} k(\sigma) \cc  \left< \; R(\sigma \cc \left< \beta \right>, \beta) \; \right> & \text{ if } \len(\sigma)  = \q{k(\sigma)} \\ k(\sigma) & \text{ otherwise } \end{cases}$
\item %$k \left( \bigcup_{\alpha<\lambda} \sigma_\alpha\right) := \bigcup_{\alpha<\lambda} k(\sigma_\alpha) $ for limit $\lambda < \kappa$.
For $\sigma$ of limit length (and  $\sigma=\varnothing$), $k(\sigma) :=  \bigcup \{k(\sigma') : \sigma' \subset \sigma\}  $.

\end{itemize}
We claim that $\gamma = \lim(k)$ is as required. Checking that $k$ is pre-continuous is easy. Let us check that $\gamma$ is strongly-converting. By construction,  for every $\alpha$, $\gamma(x)(\alpha) = \beta'$ iff there is some $\sigma \cc \left<\beta\right> \subseteq x$ such that

\begin{enumerate}
\item $\beta' = R(\sigma \cc \left< \beta \right>,\beta)$
\item $k(\sigma) = \gamma(x) \till \alpha$
%\item $k(\sigma \cc \left<\beta\right>) = k(\sigma) \cc \left< \: R(\sigma, \beta) \: \right>$
\item $\len(\sigma) = \q{k(\sigma)}$ \end{enumerate}

 Therefore $\gamma(x)(\alpha) = \beta' \geq \beta  = x(\len(\sigma)) = x(\q{k(\sigma)}) = x(\q{\gamma(x) \till \alpha})$.

\p It remains to prove that $k$ is limit-closure-preserving. Since this is  very similar to the proof of Lemma \ref{higher}, we will leave out some details. Let $T$ be a limit-closed tree,  $U := \tr(k \pf T)$, and $\{u_i : i<\lambda\}$ an increasing sequence in $U$.  For each $i$, let $s_i \in T$ be minimal such that $u_i \subseteq k(s_i)$ (in this case, we actually have $u_i = k(s_i)$, but this is not relevant). %But now, since by construction $\len(\sigma \cc \left<\beta\right>) \leq \len(\sigma)+ 1$ and $s_i$ is minimal, we must actually have $u_i = k(s_i)$. 
As before, we will be done if we  prove the following claim:

\p \textbf{Claim.} $s_i \subseteq s_{i'}$ for all $i<i'$.

\begin{proof}  Suppose $s_i \not\subseteq s_{i'}$. Since $s_{i'} \subset s_i$ is impossible, we must have $s_i \bot s_{i'}$, so let $r$ be maximal with $r \subseteq s_i$ and $r \subseteq s_{i'}$. Again we must have $k(r) \subset u_i \subseteq u_{i'}$, hence we can find least $r_0$  with $r \subseteq r_0 \subseteq s_i$ and $k(r) \subset k(r_0)$, and least $r_1$ with $r \subseteq r_1 \subseteq s_{i'}$ and $k(r) \subset k(r_1)$. Moreover $r_0$ and $r_1$ are both of successor length, say with last digit $\beta_0$ and $\beta_1$, respectively. Then $k(r_0) = k(r) \cc \left< R(r_0, \beta_0)\right>$ and $k(r_1) = k(r) \cc \left< R(r_1, \beta_1)\right>$. Since $r_0 \neq r_1$ and $R$ is injective, we obtain a contradiction as before.  \qedhere (Claim) \end{proof} \vskip-0.2cm \end{proof}

It is clear that if $\gamma$ is strongly converting and $T \Vdash $ ``$\d$ is dominating'', then $T \Vdash $ ``$\gamma(\d)$ is strongly dominating''.  With this, we are ready to prove the final result.

\begin{Thm}[Main Theorem 3] \label{MainThm3} Assume $\klk =\kappa$. Suppose $\IP$ is a ${<}\kappa$-distributive tree forcing,  $h$ a pre-continuous function in the ground model with $f = \lim(h)$, and assume that  $T \Vdash$ ``$f(\xxg)$ is a dominating $\kappa$-real''. Then $T \Vdash$``there is a Cohen $\kappa$-real''. \end{Thm}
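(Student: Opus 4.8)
The plan is to reduce Theorem \ref{MainThm3} to Lemma \ref{lemma1} by manufacturing a \emph{single} pre-continuous, limit-closure-preserving function whose limit sends the generic to a \emph{strongly} dominating $\kappa$-real. All the raw material is already available: Lemma \ref{higher} lets me enlarge $h$ coordinatewise to a limit-closure-preserving $j$, Lemma \ref{second} provides a limit-closure-preserving $k$ whose limit $\gamma$ is strongly-converting, and Lemma \ref{lemma3} identifies the interpretation tree of a continuously-defined real with the image tree of the condition. The theorem itself is then an assembly of these ingredients.

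First I would apply Lemma \ref{higher} to $h$ to obtain a pre-continuous, limit-closure-preserving $j$ with $h(\sigma)(\alpha) \le j(\sigma)(\alpha)$ for all $\sigma,\alpha$, and set $g := \lim(j)$. Passing to limits gives $f(x)(\alpha) \le g(x)(\alpha)$ wherever both are defined; since $\len(j(\sigma)) = \len(h(\sigma))$ by construction, $T$ still forces $\{\len(j(\sigma)) : \sigma \subseteq \xxg\}$ cofinal in $\kappa$, so $g(\xxg)$ is a genuine element of $\kk$, and being coordinatewise above the dominating real $f(\xxg)$ it is itself dominating. Thus $T \Vdash$ ``$g(\xxg)$ is dominating''. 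Next I apply Lemma \ref{second} to obtain a pre-continuous, limit-closure-preserving $k$ with $\gamma = \lim(k)$ strongly-converting, so that by the remark following Lemma \ref{second}, $T \Vdash$ ``$\gamma(g(\xxg))$ is strongly dominating''.

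The crux is to package $\gamma \circ g$ as a single limit-closure-preserving pre-continuous function, since Lemmas \ref{lemma3} and \ref{lemma1} are phrased for one continuous map producing the dominating real. I would set $\ell := k \circ j$, i.e.\ $\ell(\sigma) := k(j(\sigma))$, and verify three things: (i) $\ell$ is pre-continuous, using that the $j(\sigma)$ for $\sigma \subseteq x$ are cofinal among initial segments of $g(x)$ and that $k$ is monotone with cofinal lengths; (ii) $\lim(\ell) = \gamma \circ g =: F$, by the same cofinality and monotonicity observation; and (iii) the tree identity $\tr(\ell\pf T) = \tr(k\pf(\tr(j\pf T)))$, which holds because $\tau \subseteq k(j(\sigma))$ for some $\sigma \in T$ iff $\tau \subseteq k(\rho)$ for some $\rho \in \tr(j\pf T)$. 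From (iii), limit-closedness of $T$ yields limit-closedness of $\tr(j\pf T)$ (as $j$ is limit-closure-preserving), and hence of $\tr(\ell\pf T)$ (as $k$ is), so $\ell$ is limit-closure-preserving.

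Finally, let $\dot{d}$ name $F(\xxg) = \gamma(g(\xxg))$. Then $T \Vdash$ ``$\dot{d}$ is strongly dominating'' by the second step, and in particular $\dot{d} \in \kk$, which secures the implicit well-definedness of $F$ on $\xxg$ needed to invoke Lemma \ref{lemma3}. For every $q \le T$ we have $q \Vdash \dot{d} = F(\xxg)$, so Lemma \ref{lemma3} gives $\Tdq = \tr(\ell\pf q)$, which is limit-closed because $\ell$ is limit-closure-preserving and $q$ is a limit-closed tree. Thus the hypotheses of Lemma \ref{lemma1} are met for $\dot{d}$ below $T$, and we conclude $T \Vdash$ ``there is a Cohen $\kappa$-real''. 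I expect the only genuine obstacle to be the third step, namely checking that the composition of two limit-closure-preserving functions is again limit-closure-preserving with limit $\gamma \circ g$, together with the bookkeeping that $F$ is well-defined on the generic; everything else is a direct citation of the preceding lemmas.
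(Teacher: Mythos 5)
Your proposal is correct and follows essentially the same route as the paper's proof: apply Lemma \ref{higher} to get $j$, Lemma \ref{second} to get $k$ and $\gamma$, observe that $k \circ j$ is limit-closure-preserving, identify the interpretation trees via Lemma \ref{lemma3}, and conclude with Lemma \ref{lemma1}. The only difference is that you explicitly verify details the paper leaves implicit --- pre-continuity of the composition, the identity $\tr((k\circ j)\pf T)=\tr(k\pf(\tr(j\pf T)))$ behind the one-line claim that limit-closure-preservation is closed under composition, and the well-definedness of the composed map on the generic --- all of which check out.
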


\begin{proof}  First we apply Lemma \ref{higher} to obtain a pre-continuous and limit-closure-preserving function $j$. Then, for $g = \lim(j)$,  it follows that $T \Vdash  $ ``$g(\xxg)$ is a dominating $\kappa$-real''. 

\p Now let $k$ and $\gamma$ be as in  Lemma \ref{second}. Then $T \Vdash   $ ``$\gamma(g(\xxg))$ is strongly dominating''.  %Moreover, $\gamma \circ g$ is continuous and limit-closure-preserving.

\p Let $\dot{e}$ be the name such that $T \Vdash \gamma(g(\xxg)) = \dot{e}$.  Since $k$ and $j$ are limit-closure-preserving, so is $k \circ j$. Therefore, by Lemma \ref{lemma3}, $\T_{\dot{e},T} = \tr ((k \circ j)''T)$ is limit-closed. Of course, the same applies for any stronger condition $S \leq T$, i.e.,  $\T_{\dot{e}, S}$ is also limit-closed for every $S \leq T$. This is all we need to apply  Lemma \ref{lemma1}, from which it follows that $T \Vdash $``there is a Cohen $\kappa$-real''. \end{proof} 

Unfortunately, none of the methods in this section seem to settle Question \ref{qqq}, which the authors consider very significant in the context of forcing over $\kk$: \emph{``Is it true that every ${<}\kappa$-distributive forcing adding a dominating $\kappa$-real adds a Cohen $\kappa$-real? Is it at least true for every ${<}\kappa$-closed forcing?''}

\s{Acknowledgments.} We would like to thank Hugh Woodin and Martin Goldstern for useful discussion and advice.

\bibliographystyle{plain}
\bibliography{Khomskii_Master_Bibliography}{}

\end{document}